\newtheorem{Theorem}{Theorem}[section]
\newtheorem{Cor}[Theorem]{Corollary}
\newtheorem{Lemma}[Theorem]{Lemma}
\newtheorem{Proposition}[Theorem]{Proposition}
\newtheorem{Definition}[Theorem]{Definition}
\newtheorem{rem}[Theorem]{Remark}
\newtheorem{example}[Theorem]{Example}
\newcommand{\R}{\mathbb R}
\newcommand{\K}{\mathbb{K}}
\newcommand{\N}{\mathbb N}
\newcommand{\C}{\mathcal{C}}
\newcommand{\F}{\mathcal{F}}
\newcommand{\p}{\mathcal{P}}
\newcommand{\rel}{\mathcal{R}}
\newcommand{\mcc}{\mathcal{C}}
\title[Profinite diffeology]{On the geometry of profinite diffeological spaces}
\author{Anahita Eslami Rad${}^\times$}
\author{Jean-Pierre Magnot${}^{*}$}
\author{Enrique G. Reyes${}^\dagger$}
\address{$^\times$ Department of Mathematics, FaMAF, Universidad Nacional de Cordoba, Medina Allende, Ciudad 
Universitaria , 5000-Cordoba, Argentina}
\address{$^*$  Univ. Angers, CNRS, LAREMA, SFR MATHSTIC, F-49000 Angers, France ;\\ 
Lepage Research Institute, 17 novembra 1, 081 16 Presov, Slovakia;
	\\ and \\  Lyc\'ee Jeanne d'Arc, \\ Avenue de Grande Bretagne, \\ 63000 Clermont-Ferrand, France}
\address{$^\dagger$ Departmento de Matem\'{a}tica y Ciencia de la Computaci\'{o}n, \\
	Universidad de Santiago de Chile \\ Casilla 307 Correo 2,
	Santiago, Chile.}
\email{${}^\times$ anahita.eslami@unc.edu.ar}
\email{$^*$magnot@math.cnrs.fr}
\email{$^*$jean-pîerr.magnot@ac-clermont.fr}
\email{$\dagger$enrique.reyes@usach.cl}
\email{$\dagger$e\_g\_reyes@yahoo.ca}
\begin{document}

	\begin{abstract}
		We consider the class of profinite diffeological spaces, that is, diffeological spaces which diffeologies are deduced by pull-back of diffeologies on finite-dimensional manifolds through a system of projection mappings. This class includes inductive limits of finite-dimensional manifolds, as well as solution spaces of differential relations and spaces that agree with cylindrical approximations. We analyze tangent and cotangent spaces, differential forms, Riemannian metrics, connections, differential operators, Laplacians, symplectic forms, momentum maps and the relation between de Rham and singular cohomology on them, unifying constructions partially developed in various contexts. 
	\end{abstract}
	
	\maketitle
	\vskip 12pt
	\textit{Keywords:} diffeological spaces, profinite geometry.
	
	\textit{MSC (2020): 58A05, 53C15 } 
	\section*{Introduction}

Projective systems are a classical and powerful tool for constructing and analyzing infinite-dimensional spaces by assembling finite-dimensional approximations. Such systems naturally arise in algebraic geometry, {}{infinite dimensional geometry}, topology, and analysis—where examples include inverse systems of manifolds, jet spaces, and spaces of paths or loops equipped with finite evaluations. However, the geometric structure of their projective limits often escapes the framework of smooth manifolds or even {}{Fr\'echet} manifolds.

To address this difficulty, the theory of \emph{diffeological spaces}, originally introduced by J.-M. Souriau \cite{Sou} and later developed extensively by Iglesias-Zemmour \cite{Igdiff}, provides a natural language. Diffeologies allow one to define smooth maps, differential forms, and tangent objects on arbitrary sets equipped with a family of plots—without requiring charts, local trivializations, or topological regularity. This flexibility makes them particularly well-suited to study spaces that arise as limits of smooth structures.

In this paper, we propose a general geometric framework for analyzing projective systems through the lens of diffeology. {}{A similar work has been initiated in \cite{AM99}, in a period when the vocabulary and the technical results on diffeologies and diffeological-like settings were not as developed as now. This delay in time explains the vocabulary that superficailly differs between the two works, and we will comment on the differences and the deep similarities of some of our results, that were already obtained in this paper. Indeed, the actual technical capacities on diffeologie enable one to go much further in the results than whan was possible to do more 25 years ago in the setting of Fr\"olicher spaces \cite{KM}. Since \cite{Ma2006-3}, the two frameworks have been compared, and Fr\"olicher spaces appear as a restricted category of diffeological spaces, see e.g. \cite{BKW2025,GMW2023,Ma2013,Wa}.}
     
     Given {}{an index set $I$, ordered but not totally ordered in general, and} a projective system \( \{X_i, \pi_{ij}\}_{i \leq j \in I} \) of diffeological spaces with smooth bonding maps, we endow the projective limit, {}{in the sense of \cite{AM2016},}   with the \emph{projective diffeology}—the coarsest diffeology making all canonical projections smooth. This construction is fully compatible with the structure of differential forms and tangent {{} bundles,} and allows us to transport geometric data from the finite-dimensional levels to the limit space.

We investigate in particular:
\begin{itemize}
  \item the behavior of differential forms under pullback along the projections,
  \item the identification of tangent and cotangent modules of the limit in terms of the system,
  \item and the conditions under which structures such as vector bundles or differential complexes descend to the limit.
\end{itemize}

This perspective enables us to treat {{} a large class of} infinite-dimensional spaces {{}(at least those underlying technical points in finite dimensional analysis or geometry)} in a controlled and geometrically meaningful way. As a key example, we consider the class of spaces equipped with \emph{cylindrical diffeologies}, i.e., diffeologies generated by finite-dimensional evaluation maps. These arise naturally in analysis on function spaces {{} over finite dimensional Lindel\"off topological manifolds}, where the evaluation at finite sets of parameters provides a canonical system of projections.

Beyond the standard examples of ``towers'' of inclusions indexed by the totally ordered index set $\N,$ we reach examples where the index set is only partially ordered.
A particularly important case is the classical Wiener space \( W = C_0([0,1], \mathbb{R}^n) \), which can be seen as the projective limit of finite-dimensional distributions of Brownian motion. Its cylindrical diffeology encodes the smooth structure relevant to Malliavin calculus and stochastic differential geometry \cite{ELA1999,Mal1997,Wa1984}, and allows for a differential-geometric treatment of forms and cotangent modules. 

The paper is organized as follows: after an introcution to the necesary notions in diffeologies in Section \ref{s:diffeology}, Section~\ref{s:profinite} introduces the projective diffeology. Then classical differential geometric constructions ar discussed: smooth mappings, bundles, group actions, cotangent and tangent spaces, differential forms, (pseudo-)Riemannian and (pseudo-)Hermitian metrics, symplectic forms and non-degeneracy. We finish with {{}illustrating} examples, including algebras {{}of} matrices, jet spaces and the Wiener space. We prove that {{} general} profinite diffeological spaces do not suffer all the possible pathologies of more genral diffeological spaces. For example, the different definitions of the tangent space of a diffeological space turn out to be equivalent for profinite diffeological spaces, polynomials of cylindrical functions separate the points of the space, and {{}standard constructions in symplectic geometry, such as the introduction of momentum maps, can be performed in a classical way.} (compare with the general constructions of \cite{Igdiff}).

	\section{Diffeological preliminaries} \label{s:diffeology}
	This section provides the necessary background on diffeology and vector pseudo-bundles, mostly following 
	\cite{MR2019,Ma2020-3}, but keeping \cite{Igdiff} as our main reference for diffeologies. A 
	complementary non-exhaustive bibliography on these subjects is 
	\cite{BKW2025,BN2005,BT2014,CN,CSW2014,CW2014,Don,DN2007-1,DN2007-2,IgPhD,Leandre2002,pervova,Wa}.
	
	\subsection{Basics of Diffeology}\label{ss:diffeology}
	
	In this subsection we review the basics of the theory of diffeological spaces; in particular, their 
	definition, categorical properties, as well as their induced topology. {{} The main idea of diffeologies (and 
	also of Fr\"olicher spaces, which were defined shortly after, see {\em e.g.} \cite{MR2016} and references
	therein) is to replace the atlas of a classical manifold by other intrinsic objects that enable to define 
	smoothness of mappings in a safe way, considering manifolds as a restricted class of examples. Many such 
	settings have been developed independently. We choose the two settings just mentioned because they possess 
	nice properties such as cartesian closedness, thereby carrying the necessary fundamental properties for e.g. 
	calculus of variations, and also because they are very easy to use in a differential geometric way of 
	thinking. The starting point of diffeological and Fr\"olicher spaces consists in defining families of smooth 
	maps via mild axiomatic conditions that ensure that they have technical features of interest. }
	
	\begin{Definition}[Diffeology] \label{d:diffeology}
		Let $X$ be a set.  A \textbf{parametrisation} of $X$ is a
		map 
		$p \colon U \to X$, in which $U$ is an open subset of Euclidean space (no fixed dimension).  A 
		\textbf{diffeology} $\p$ on $X$ is a set of parametrisations satisfying the following three conditions:
		\begin{enumerate}
			\item (Covering) For every $x\in X$ and every non-negative integer
			$n$, the constant function $p\colon \R^n\to\{x\}\subseteq X$ is in
			$\p$.
			\item (Locality) Let $p\colon U\to X$ be a parametrisation such that for
			every $u\in U$ there exists an open neighbourhood $V\subseteq U$ of $u$
			satisfying $p|_V\in\p$. Then $p\in\p$.
			\item (Smooth Compatibility) Let $p\colon U\to X$ be a parametrisation in $\p$.
		Then for every $n >0$, every open subset $V\subseteq\R^n$, and every
			smooth map $F\colon V\to U$, we have $p\circ F\in\p$.
		\end{enumerate}
		
		A \textbf{diffeological space} $(X,\p)$ is a set $X$ equipped with a diffeology $\p$.
		When the diffeology is understood, we drop the symbol $\p$.
		The parametrisations $p\in\p$ are called \textbf{plots}.
	\end{Definition}
	
	{{}
		\noindent\textbf{Notation.} We recall that $\N^* = \{n \in \N \, | \, n \neq 0\}$ and that 
		$\forall m \in \N^*, \N_m = \{1,...,m\} \subset \N.$}
	
	\begin{Definition}[Diffeologically Smooth Map]\label{d:diffeolmap}
		Let $(X,\p_X)$ and $(Y,\p_Y)$ be two diffeological
		spaces, and let $F \colon X \to Y$ be a map.  We say that $F$ is
		\textbf{diffeologically smooth} if for any plot $p \in \p_X$,
		$$F \circ p \in \p_Y.$$
	\end{Definition}

	Diffeological spaces with diffeologically smooth maps form a category which is complete, co-complete, 
	and in fact, a quasi-topos (see \cite{BH}).  In particular, in this category pull-backs, push-forwards, 
	(infinite) products, and (infinite) coproducts exist. 
	
	We define pull-backs and push-forwards by means of the
	following propositions, after \cite{Sou,Igdiff}.
	
	\begin{Proposition}
		Assume that $(X',\p)$ is a diffeological space.
		{{}Let} $X$ be a set and $f:X\rightarrow X'$ a map.
		The \textbf{pull-back diffeology}  $f^*(\p)$ on $X$ is 
		{{} $$f^*( \p)= \left\{ p: O_p \rightarrow X \, |   f \circ p \in \p \right\}. $$ }
	\end{Proposition}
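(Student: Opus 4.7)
The plan is to verify that the proposed set $f^*(\p)$ satisfies the three axioms of Definition \ref{d:diffeology}, since the proposition is essentially an existence (and explicit description) claim for the pull-back diffeology. Since $f^*(\p)$ is defined by a condition on the composition $f \circ p$, each axiom for $f^*(\p)$ is expected to reduce, via the functoriality of composition, to the corresponding axiom for $\p$ on $X'$.

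First, for the covering axiom, I would take a constant parametrisation $p \colon \R^n \to \{x\} \subseteq X$. Then $f \circ p$ is the constant parametrisation with value $f(x) \in X'$, which lies in $\p$ by the covering axiom for $(X',\p)$; hence $p \in f^*(\p)$. Second, for locality, suppose $p \colon U \to X$ is a parametrisation such that every $u \in U$ admits a neighbourhood $V \subseteq U$ with $p|_V \in f^*(\p)$. Then $(f \circ p)|_V = f \circ (p|_V) \in \p$ for every such $V$, so by the locality axiom applied in $(X',\p)$ we have $f \circ p \in \p$, i.e. $p \in f^*(\p)$. Third, for smooth compatibility, given $p \in f^*(\p)$ and a smooth map $F \colon V \to U$ between Euclidean open sets, we have $f \circ (p \circ F) = (f \circ p) \circ F \in \p$ by the smooth compatibility axiom for $\p$, so $p \circ F \in f^*(\p)$.

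These three verifications exhaust the content of the proposition; there is no serious obstacle, as the argument is purely formal and rests on the associativity of composition combined with the diffeology axioms already assumed on the target $(X',\p)$. If one wishes to justify the terminology \emph{pull-back}, one could add a short remark that $f^*(\p)$ is by construction the coarsest diffeology on $X$ making $f$ smooth: any diffeology $\p'$ on $X$ with $f$ smooth must satisfy $f \circ p \in \p$ for all $p \in \p'$, hence $\p' \subseteq f^*(\p)$. This universal characterisation, which will be used tacitly in the sequel when projective diffeologies are defined as intersections of pull-backs, is the only point that goes slightly beyond axiom checking.
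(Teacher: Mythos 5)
Your verification is correct and is essentially the intended argument: the paper states this proposition without proof (deferring to Souriau and Iglesias-Zemmour), and checking the three axioms of Definition \ref{d:diffeology} for $f^*(\p)$ via associativity of composition is exactly the standard route. One minor caution on your closing remark: you call $f^*(\p)$ the \emph{coarsest} diffeology making $f$ smooth, meaning the largest for inclusion (the usual convention), whereas the paper's own parenthetical in the push-forward proposition glosses ``coarsest'' as ``smallest for inclusion,'' so your terminology clashes with the paper's even though the mathematical content of your universal characterisation is right.
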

	
	\begin{Proposition} Assume that $(X,\p)$ is a diffeological space,
		and let $X'$ be a set and $f:X\rightarrow X'$ a map.
		The \textbf{push-forward diffeology} $f_*(\p)$ is the coarsest (i.e. the smallest for inclusion) among the 
		diffologies on $X'$, which contain $f \circ \p.$ 
	\end{Proposition}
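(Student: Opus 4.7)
The plan is to establish that the coarsest diffeology on $X'$ containing $f\circ\p$ exists, so that the implicit definition of $f_*(\p)$ is unambiguous. The strategy is the standard one: exhibit the collection of diffeologies on $X'$ containing $f\circ\p$ as non-empty and closed under arbitrary intersection, then take the intersection.

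First I would note that the collection $\F$ of diffeologies on $X'$ containing $f\circ\p$ is non-empty, since the set of \emph{all} parametrizations of $X'$ trivially satisfies the three axioms of Definition~\ref{d:diffeology} and contains $f\circ\p$. Second, I would verify that an arbitrary intersection $\bigcap_\alpha \p_\alpha$ of diffeologies on $X'$ is itself a diffeology: constant parametrizations lie in every $\p_\alpha$; if each local restriction $p|_V$ belongs to every $\p_\alpha$, then by locality applied in each $\p_\alpha$ we get $p\in\p_\alpha$ for all $\alpha$; and smooth compatibility is preserved in the same way. Setting $f_*(\p) \coloneqq \bigcap_{\p' \in \F} \p'$ then produces the smallest element of $\F$, proving the proposition.

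For a more usable description I would also check that $f_*(\p)$ coincides with the set of parametrizations $p\colon U\to X'$ such that every $u\in U$ admits an open neighbourhood $V\subseteq U$ on which either $p|_V$ is constant or $p|_V = f\circ q$ for some plot $q\in\p$ defined on $V$. This explicit family manifestly contains $f\circ\p$ and is contained in every member of $\F$, so by the minimality just established it agrees with the intersection. The only step requiring mild care is smooth compatibility for this explicit family: after precomposition with a smooth $F\colon W\to U$, the neighbourhoods witnessing ``constant or $f\circ q$'' may fragment, but locality accommodates the re-gluing and the smooth compatibility of $\p$ handles the $f\circ q$ pieces. I do not anticipate any deeper obstacle; the argument is routine once the axioms are unwound.
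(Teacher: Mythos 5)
Your argument is correct and is the standard one: the paper states this proposition without proof (deferring to Souriau and Iglesias-Zemmour), and your existence argument via closure of diffeologies under arbitrary intersection, together with the explicit local characterisation (``locally constant or locally of the form $f\circ q$ with $q\in\p$''), is exactly the argument found in those references. You also correctly navigate the paper's slightly non-standard terminology, reading ``coarsest'' as it is glossed there, namely smallest for inclusion among diffeologies containing $f\circ\p$. Nothing is missing.
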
  
	
	\begin{Definition}
		Let $(X,\p)$ and $(X',\p')$
		be two diffeological spaces.  A map $f : X \rightarrow X'$ is  called  a subduction
		if  $\p' = f_*(\p).$ 
	\end{Definition}
	
	In particular, we have the following constructions.
	
	\begin{Definition}[Product Diffeology]\label{d:diffeol product}
		Let $\{(X_i,\p_i)\}_{i\in I}$ be a family of diffeological spaces.  Then the \textbf{product diffeology} 
		$\p$ on $X=\prod_{i\in I}X_i$ contains a parametrisation $p\colon U\to X$ as a plot if and only if for 
		every $i\in I$, the map $\pi_i\circ p$ is in $\p_i$.  Here, $\pi_i$ is the canonical projection map 
		$X\to X_i$. 
	\end{Definition}
\noindent 	In other words, $\p = \cap_{i \in I} \pi_i^*(\p_i)$ and each $\pi_i$ is a 
	subduction. 
	
	\begin{Definition}[Subset Diffeology]\label{d:diffeol subset}
		Let $(X,\p)$ be a diffeological space, and let $Y\subseteq X$.  Then $Y$ comes equipped with the 
\textbf{subset diffeology}, which is the set of all plots in $\p$ with image in $Y$.
	\end{Definition}

\smallskip

	If $X$ is a finite or infinite-dimensional smooth manifold modelled on a complete locally convex topological 
	vector space, we define its \textbf{nebulae diffeology} as
\begin{equation} \label{aste}
\p_\infty(X) = \left\{ p \in C^\infty(O,X) \hbox{ (in the usual sense) }| O \hbox{ is open in } \R^d, d \in \N^* 
	\right\}.
\end{equation}

	\subsection{Fr\"olicher spaces}
	
	\begin{Definition} ~ 
	
	$\bullet$ A \textbf{Fr\"olicher} space is a triple
		$(X,\F,\mcc)$ such that
		
		- $\mcc$ is a set of paths $\R\rightarrow X$ and $\F$ is a set of functions from $X$ to $\R$;
		
		- A function $f:X\rightarrow\R$ is in $\F$ if and only if for any
		$c\in\mcc$, $f\circ c\in C^{\infty}(\R,\R)$;
		
		- A path $c:\R\rightarrow X$ is in $\mcc$ (i.e. is a \textbf{contour})
		if and only if for any $f\in\F$, $f\circ c\in C^{\infty}(\R,\R)$.
		
		 $\bullet$ Let $(X,\F,\mcc)$ et $(X',\F',\mcc')$ be two
		Fr\"olicher spaces, a map $f:X\rightarrow X'$ is \textbf{differentiable}
		(=smooth) if and only if one of the following equivalent conditions is fulfilled:
		\begin{itemize}
			\item $\F'\circ f\circ\mcc\subset C^{\infty}(\R,\R)$
			\item $f \circ \C \subset \C'$
			\item $\F'\circ f \subset  \F$ 
		\end{itemize}
	\end{Definition}
	
	Any family of maps $\F_{g}$ from $X$ to $\R$ generate a Fr\"olicher
	structure $(X,\F,\mcc)$ by setting, after \cite{KM}:
	\vskip 5pt
	\begin{flushleft}
	
	First step: $\mcc=\{c:\R\rightarrow X \hbox{ such that } \F_{g}\circ c\subset C^{\infty}(\R,\R)\}$
	
	Second step: $\F=\{f:X\rightarrow\R\hbox{ such that }f\circ\mcc\subset C^{\infty}(\R,\R)\}.$
	
\end{flushleft}	
	\vskip 5pt
\noindent We easily see that $\F_{g}\subset\F$. This construction will be useful
	in the sequel to describe in a simple way a Fr\"olicher structure.
	A Fr\"olicher space carries a natural topology, the pull-back topology of $\R$ via $\F$. { In the 
	case of a finite-dimensional differentiable manifold $M$, we can equip $M$ with the Fr\"olicher structure
	determined by the sets
	$$ \C = \{ c : \R\rightarrow M \hbox{ such that } c \in C^{\infty}(\R,M) \}$$ and 
	$$ \F = \{ f : M \rightarrow\R \hbox{ such that } f \in C^{\infty}(M,\R)\}.$$
	The underlying topology
	of this Fr\"olicher structure is the same as the manifold topology, \cite{KM}.} 
	
	Let us now compare Fr\"olicher spaces with diffeological spaces. We equip a Fr\"olicher space $(X,\F,\mcc)$
	with the following diffeology {{}$\p_\infty(\F)$}:
	
	\noindent Let $O \subset \R^p${ be an open subset of a Euclidean space; we set} 
		$$\p_\infty(\F)_O=
		\coprod_{p\in\N}\{\, f : O \rightarrow X; \, \F \circ f \subset C^\infty(O,\R) \quad \hbox{(in
			the usual sense)}\}$$
		and 
		$$ \p_\infty(\F) = \bigcup_O \p_\infty(\F)_O\, ,$$
		where the latter union is extended over all open sets $O \subset \R^n$ for $n \in \N^*.$ 
	 {In analogy with (\ref{aste}), we call $\p_\infty(\F)$ the ``nebulae diffeology", along the lines of 
	 \cite{Ma2013}. }
	With this construction, we get a natural diffeology when
	$X$ is a Fr\"olicher space and we can easily show the following:
	
	\begin{Proposition} \label{Frodiff} \cite{Ma2006-3} 
		Let$(X,\F,\mcc)$
		and $(X',\F',\mcc')$ be two Fr\"olicher spaces. A map $f:X\rightarrow X'$
		is smooth in the sense of Fr\"olicher if and only if it is smooth for
		the underlying nebulae diffeologies. 
	\end{Proposition}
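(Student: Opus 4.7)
The plan is to unwind the two definitions and show each implication directly. Recall that $f$ is Fr\"olicher smooth iff $\F' \circ f \subset \F$ (equivalently, $f\circ \mcc \subset \mcc'$ or $\F'\circ f \circ \mcc \subset C^\infty(\R,\R)$), and that $f$ is smooth for the nebulae diffeologies iff for every open $O \subset \R^n$ and every $p : O \to X$ with $\F\circ p \subset C^\infty(O,\R)$ one has $\F'\circ(f\circ p) \subset C^\infty(O,\R)$.

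For the forward direction, assume $\F'\circ f \subset \F$ and pick any plot $p \in \p_\infty(\F)_O$. For any $g \in \F'$, the composite $g\circ f$ lies in $\F$ by hypothesis, so $(g\circ f)\circ p = g \circ (f\circ p)$ lies in $C^\infty(O,\R)$ because $p$ is a nebula plot. Since $g$ was arbitrary, $f\circ p \in \p_\infty(\F')_O$; this gives nebula-smoothness of $f$.

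For the converse, assume $f$ is smooth for the nebulae diffeologies and fix a contour $c \in \mcc$. The key observation is that such a $c$, viewed as a parametrisation with domain $\R$, automatically belongs to $\p_\infty(\F)_\R$: indeed, by the very definition of $\mcc$, we have $\F\circ c \subset C^\infty(\R,\R)$. Applying the nebula-smoothness of $f$ to $c$ yields $f\circ c \in \p_\infty(\F')_\R$, i.e.\ $\F'\circ f\circ c \subset C^\infty(\R,\R)$. Since $c$ was arbitrary this gives $\F'\circ f \circ \mcc \subset C^\infty(\R,\R)$, which is Fr\"olicher smoothness.

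There is no real obstacle here; the statement is essentially a definition-chase. The only point that deserves care is the identification of Fr\"olicher contours with 1-dimensional nebula plots in the backward direction, which is what makes the comparison between the two categories work.
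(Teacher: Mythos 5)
Your proof is correct: both directions are the straightforward definition-chase that the paper leaves implicit (it gives no proof, only the citation to \cite{Ma2006-3}), and your key observation for the converse --- that every contour $c \in \mcc$ is itself a plot of $\p_\infty(\F)$ with domain $\R$, so nebula-smoothness applied to $c$ yields $\F'\circ f\circ c \subset C^\infty(\R,\R)$ --- is exactly what makes the equivalence work. No gaps.
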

	
	{
	We finish our comparison of the notions 
	of diffeological and Fr\"olicher space following mostly \cite{Ma2006-3,Wa}:}
	
	\begin{Theorem} \label{compl-fro}
		Let $(X,\p)$ be a diffeological space. There exists a unique Fr\"olicher structure
		$(X, \F_\p, \mcc_\p)$ on $X$ such that for any Fr\"olicher structure $(X,\F,\mcc)$ on $X,$ these two 
		equivalent conditions are fulfilled:
		
		(i)  the canonical inclusion is smooth in the sense of Fr\"olicher 
		$(X, \F_\p, \mcc_\p) \rightarrow (X, \F, \mcc)$
		
		(ii) the canonical inclusion is smooth in the sense of diffeologies 
		$(X,\p) \rightarrow (X, \p_\infty(\F)).$ 
		
		\noindent Moreover, $\F_\p$ is generated by the family 
		$$\F_0=\lbrace f : X \rightarrow \R \hbox{ smooth for the 
			usual diffeology of } \R \rbrace.$$
		{{} We say that the {{}Fr\"olicher} structure $(X, \F_\p, \mcc_\p)$ is the \textbf{Fr\"olicher completion} of 
		$\p$ }
	\end{Theorem}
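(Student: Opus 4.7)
The plan is to construct $(X, \F_\p, \mcc_\p)$ explicitly via the two-step generation procedure recalled just before the theorem, applied to $\F_0 = \{f : X \to \R \mid f \text{ is diffeologically smooth for } \p\}$. The first step will produce $\mcc_\p = \{c : \R \to X \mid \F_0 \circ c \subset C^\infty(\R, \R)\}$, and the second will set $\F_\p = \{f : X \to \R \mid f \circ \mcc_\p \subset C^\infty(\R, \R)\}$. This is a Fr\"olicher structure by design, and tautologically contains $\F_0$.

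The key technical step will be the reverse inclusion $\F_\p \subset \F_0$. I would fix $f \in \F_\p$ and any plot $p : O \to X$ of $\p$ with $O$ open in some $\R^n$. For any smooth curve $\gamma : \R \to O$, the composition $p \circ \gamma$ is a plot of $\p$ by the smooth compatibility axiom of Definition \ref{d:diffeology}; since every $g \in \F_0$ is smooth for $\p$, one has $g \circ p \circ \gamma \in C^\infty(\R, \R)$, whence $p \circ \gamma \in \mcc_\p$. It will follow that $f \circ p \circ \gamma \in C^\infty(\R, \R)$ for every smooth $\gamma$, and an application of Boman's theorem will then yield $f \circ p \in C^\infty(O, \R)$. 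Varying $p$ gives $f \in \F_0$, whence $\F_\p = \F_0$. Promoting smoothness along one-parameter probes to smoothness along arbitrary multi-parameter plots is the main obstacle of the argument, and it is precisely Boman's classical result that supplies the bridge.

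With this identification in hand, the equivalence (i) $\Leftrightarrow$ (ii) becomes formal. Assuming (i), every $f \in \F$ satisfies $f \circ c \in C^\infty(\R, \R)$ for all $c \in \mcc_\p$, so $f \in \F_\p = \F_0$; hence $f \circ p \in C^\infty$ for every plot $p \in \p$, which is exactly (ii). Conversely, (ii) forces every $f \in \F$ to be smooth for $\p$, placing it in $\F_0$; the definition of $\mcc_\p$ will then deliver $f \circ c \in C^\infty$ for every $c \in \mcc_\p$, which is (i).

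For uniqueness, I would argue that if $(X, \F', \mcc')$ also satisfies the characterization, then applying it with $(X,\F,\mcc)=(X,\F',\mcc')$ and using the triviality of the identity on the (i) side yields that $(X, \p) \to (X, \p_\infty(\F'))$ is smooth. The characterization of $(X, \F_\p, \mcc_\p)$ with $\F = \F'$ then gives $(X, \F_\p, \mcc_\p) \to (X, \F', \mcc')$ Fr\"olicher-smooth; the reverse direction follows symmetrically by swapping roles, so the two Fr\"olicher structures coincide. The generating family $\F_0$ is manifestly intrinsic to $\p$, which confirms the last assertion of the theorem.
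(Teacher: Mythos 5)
The paper states this theorem without proof, referring instead to \cite{Ma2006-3,Wa}; your argument supplies exactly the standard proof from those sources. Your identification $\F_\p=\F_0$ via Boman's theorem (precomposing a plot with smooth curves to reduce multi-parameter smoothness to one-parameter smoothness) is the correct key step, and the equivalence of (i) and (ii) together with the uniqueness argument then follow formally as you describe. The only point worth making explicit is that the two-step generation recalled before the theorem really does yield a Fr\"olicher structure (i.e.\ that the contours determined by $\F_\p$ are again $\mcc_\p$), but this is immediate from $\F_0\subset\F_\p$ and the definition of the second step, so your proof is complete.
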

	
	Summarizing we can state, intuitively, that the following implications hold and can be understood 
	alternatively with the vocabulary of subcategories {and functors}:
	\vskip 12pt
	\begin{tabular}{ccccc}
		Smooth manifold  & $\Rightarrow$  & Fr\"olicher space  &{ ${\Rightarrow}$   
		} & Diffeological space 
		\tabularnewline
	\end{tabular}
	\vskip 12pt

	The next remark is based on \cite[p.26, Boman's theorem]{KM}.
	\begin{rem}
		The set of contours $\C$ of the Fr\"olicher space
		$(X,\F,\C)$ \textbf{does not} give us a diffeology, because a diffelogy
		needs to be stable under restriction of domains. In the case of paths in
		$\C$ the domain is always $\R.$ However, $\C$ defines a ``minimal diffeology''
		$\p_1(\F)$ whose plots are smooth parameterizations which are locally of the
		type $c \circ g,$ where $g \in \p_\infty(\R)$  and $c \in \C.$ Within this setting,
		a  map $f : (X,\F,\C) \rightarrow (X',\F',\C')$ is smooth if and only if it is smooth  
		$(X,\p_\infty(\F)) \rightarrow (X',\p_\infty(\F')) $ or equivalently smooth 
		$(X,\p_1(\F)) \rightarrow (X',\p_1(\F')) $ 
		Moreover, any diffeology can be completed into a reflexive one, but not all the diffeologies are 
		reflexive. Indeed, in \cite{Ma2013}, it is shown that, given a {{}Fr\"olicher} space $(X,\F,\mcc),$ that 
		differ from $\p_\infty(\F),$ which is generated by the set of contours $\mcc$. This diffeology is {{}not} reflexive except in particular cases, e.g. when $X = \R$ or $S^1$ (1-dimensional manifolds). A 
		deeper analysis of these implications is given in \cite{Wa}, see e.g. \cite{BKW2025} for a non-exhaustive 
		presentation on this subject. 
	\end{rem}
	
	We also remark that given an algebraic structure, we can define a
	corresponding compatible diffeological structure. For example, a
	$\R-$vector space equipped with a diffeology is called a
	diffeological vector space if addition and scalar multiplication
	are smooth (with respect to the canonical diffeology on $\R$), see \cite{Igdiff,pervova}. An
	analogous definition holds for Fr\"olicher vector spaces. Other
	examples will arise in the rest of the text. {As an application of our foregoing remarks, we note 
	that any complete locally convex topological vector space is a diffeological vector space}. 
	
	\begin{rem} \label{comp}
		Fr\"olicher, $c^\infty$ and {{}G\^ateaux} smoothness are the same notion
		if we restrict to a Fr\'echet context, see \cite[Theorem 4.11]{KM}.
		Indeed, for a smooth map $f : (F, \p_1(F)) \rightarrow \R$ defined
		on a Fr\'echet space with its 1-dimensional diffeology, we have
		that $\forall (x,h) \in F^2,$ the map $t \mapsto f(x + th)$ is
		smooth as a classical map in $\C^\infty(\R,\R)$ and hence, it is
		Gateaux smooth. The converse is obvious.
	\end{rem}

	\begin{Proposition} \label{quotient} 
	Let $(X,\p)$ {{} be} a diffeological	space and $\rel$ an equivalence relation on $X$. Then, there is
		a natural diffeology on $X/\rel$ defined as the push-forward diffeology by the quotient projection
		$X\rightarrow X/\rel$. It is denoted by $\p/\rel$.
	\end{Proposition}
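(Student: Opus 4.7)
The plan is to observe that Proposition \ref{quotient} is essentially a direct specialisation of the push-forward construction already established to the particular case of the quotient projection $q \colon X \to X/\rel$, $x \mapsto [x]$. Since $q$ is a well-defined set-theoretic map, the preceding proposition immediately yields a diffeology $q_*(\p)$ on $X/\rel$, namely the smallest (i.e. coarsest, in the convention of the paper) collection of parametrisations containing $q \circ \p$ that satisfies the three axioms of Definition \ref{d:diffeology}. Renaming this diffeology $\p/\rel$ is then a matter of convention.

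To make the construction transparent, I would describe its plots explicitly: a parametrisation $\bar{p} \colon U \to X/\rel$ belongs to $\p/\rel$ if and only if every $u \in U$ admits an open neighbourhood $V \subseteq U$ together with a plot $p_V \in \p$ such that $\bar{p}|_V = q \circ p_V$. I would verify directly that this set of parametrisations is a diffeology: the covering axiom holds because any constant parametrisation into $X/\rel$ lifts to a constant plot of $X$; locality is built into the defining condition; and smooth compatibility follows from the identity $q \circ (p_V \circ F) = (q \circ p_V) \circ F$ combined with the smooth compatibility axiom for $\p$. Minimality among diffeologies containing $q \circ \p$ is clear since any such diffeology must, by the locality axiom, contain every local modification of the above form.

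Finally, I would record the naturality of the construction: by the definition of subduction recalled above, $q$ is tautologically a subduction from $(X,\p)$ onto $(X/\rel, \p/\rel)$, and the associated universal property pins down $\p/\rel$ uniquely, in the sense that smooth maps $(X/\rel,\p/\rel) \to (Y,\p_Y)$ correspond bijectively to smooth $\rel$-invariant maps $(X,\p) \to (Y,\p_Y)$. There is no substantial obstacle in this proof; the only mild point requiring care is checking that the explicit local description of plots actually coincides with the abstract push-forward of the preceding proposition, which is a routine verification of minimality together with the three axioms.
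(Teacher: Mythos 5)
Your proof is correct: the paper states this proposition without any proof, treating it as the standard specialisation of the push-forward construction to the surjective quotient map, which is exactly what you carry out. Your explicit local description of the plots, the verification of the three axioms (using surjectivity of $q$ for the covering axiom), and the minimality argument are the standard and intended justification, so there is nothing to add.
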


	\begin{example}
		Let $X$ be a diffeological space. Let us {{} denote} by 
		$$
		X^\infty = 
		\left\{ (x_n)_{n \in \N} \in X^\N \, | \, \{n \, | x_n \neq 0 \} \hbox{ is a finite set}\right\} \, .
		$$ 
		This is a diffeological space, as a subset of $X^\N.$ The quotient space $X^\N / X^\infty$ is a 
		diffeological space related to the settings of non standard analysis, see e.g. 
	 \cite{AHKFL1986}.  
	\end{example}
	
	\subsection{Vector pseudo-bundles}\label{section:vpb}
	
	{{} We begin with a precise look at the notion of a fiber bundle in classical (finite-dimensional) 
	differential geometry. Fiber bundles are defined by 
	\begin{itemize}
			\item a smooth manifold  $E$ called total space
			\item a smooth manifold  $X$ called base space
			\item a smooth submersion $\pi: E \rightarrow X$ called fiber bundle projection
			\item a smooth manifold $F$, called typical fiber, such that $\forall x \in X,  \pi^{-1}(x)$ 
			is a smooth submanifold of $E$ diffeomorphic to $F.$
			\item a smooth atlas on $X,$ with domains $U \subset X$ such that $\pi^{-1}(U)$ is an open submanifold 
			of $E$ diffeomorphic to $U \times F.$ We call this atlas a system of local trivializations of the fiber 
			bundle.
		\end{itemize}
It is enough to define a smooth fiber bundle as the quadruple data $(E,X,F,\pi)$, because the definitions of $\pi$ 
and $X$ enable us to find systems of local trivializations. This quadruple setting is often denoted simply 
by the projection map $\pi: E\rightarrow X.$
		
There exist diffeological spaces which carry no atlas, so that the condition of having a system of smooth 
trivializations in a generalization of the notion of fiber bundles is not a priori necessary, even if this 
condition is technically interesting, see \cite[pages 194-195]{MW2017}. 
%
In the diffeology context, the notion of quantum structure was introduced in \cite{Sou} as a generalization of 
principal bundles, and the notion of vector pseudo-bundle was advanced in \cite{pervova} as a generalization of 
vector bundles, {{} see e.g. \cite{Ma2025-1}.} The common idea of these works is the description of fibred objects made of a total 
(diffeological) space $E,$ a diffeological space $X$, and a canonical {{} (diffeologically)} smooth bundle projection 
$\pi: E \rightarrow X$ such that, $\forall x \in X,$ the set $\pi^{-1}(x)$ is endowed with a ({{}diffeologically} smooth) algebraic 
structure. The existence of local trivializations is not assumed. 
	
	\begin{enumerate}
	\item For a diffeological vector pseudo-bundle, the fibers $\pi^{-1}(x)$ are diffeological vector spaces, i.e. 
	vector spaces where addition and multiplication over a diffeological field of scalars (e.g. $\R$ or 
	$\mathbb{C}$) is smooth. We notice that \cite{pervova} only deals with finite-dimensional vector spaces.
	\item For a ``structure quantique'' (i.e. ``quantum structure'') following the terminology of \cite{Sou}, we
	assume that a diffeological group $G$ is acting from the right, smoothly and freely, on a diffeological space 
	$E$. The space of orbits $X=E/G$ equipped with the quotient diffeological structure, defines the 
	base of the quantum structure $\pi: E \rightarrow X$.
	In this picture, each fiber $\pi^{-1}(x)$ is isomorphic to $G.$
	\end{enumerate}
	
	From these two examples, we generalize this picture as follows. 
	
	\begin{Definition}\label{pseu-fib}
		Let $E$ and $X$ be two diffeological spaces and let $\pi:E\rightarrow X$ be a smooth surjective map. Then 
		$(E,\pi,X)$ is a \textbf{diffeological fiber pseudo-bundle} if and only if  $\pi$ is a subduction. 
	\end{Definition}
	
	We do not assume that there exists a typical fiber, in coherence with Pervova's diffeological vector 
	pseudo-bundles. We specialize this definition as follows, along the lines of \cite{Ma2020-3,Ma2025-1}:
	
	\begin{Definition}
		Let $\pi:E\rightarrow X$ be a diffeological fiber pseudo-bundle. Then:
		 $\pi:E\rightarrow X$ is a \textbf{diffeological $\mathbb{K}-$vector pseudo-bundle}, in which
			$\mathbb{K}$ is a diffeological field, if there exists: 
			\begin{itemize}
				\item a smooth fiberwise map $\,\cdot \, :\mathbb{K} \times E \rightarrow E,$
				\item a smooth fiberwise map $\, + :E^{(2)} \rightarrow E$ where 
				$$E^{(2)} = \coprod_{x \in X} \{(u,v) \in E^2\, | \, (u,v)\in \pi^{-1}(x)\}$$ 
				equipped by the pull-back diffeology of the canonical map $E^{(2)} \rightarrow E \times E$, 
			such that $\forall x \in X, $ $(\pi^{-1}(x),+,\cdot)$ is a diffeological $\mathbb{K}-$vector 
			{ space}. 
			\end{itemize}
			We say that $E$ is a \textbf{ diffeological vector bundle} is $E$ is a diffeological vector 
			pseudo-bundle whose fibers are all isomorphic as diffeological vector spaces. 
			
				
				
		\end{Definition}
		
		\subsection{Tangent and cotangent spaces}
		The review \cite{GMW2023} identifies five main definitions of tangent space that generalize the classical 
		tangent space of a finite-dimensional manifolds, starting from one of the following two viewpoints:
		\begin{itemize}
			\item The tangent space of a finite-dimensional manifold is defined by 1-jets of paths.
			\item  The tangent space of a finite-dimensional manifold is defined by pointwise derivations of 
			$\R-$valued maps. 
		\end{itemize}
		When replacing ``finite-dimensional manifold'' by ``diffeological space'', these two notions do not 
		coincide. Even more, unlike what happens in the $c^\infty-$setting \cite{KM}, the natural mapping 
		$$`` \hbox{1-jets} \quad \longrightarrow \quad \hbox{pointwise derivations}''$$ 
		may not be injective, and the space of 1-jets at one point may be only a cone, and not a vector space. 
		For this reason, the choice of one of the five generalizations is made taking into account the  
		application at hand.
		
{The \textbf{internal tangent cone} extends straightforwardly the definition of the tangent space of a 
		smooth 	manifold in terms of germs of paths (compare with the kinematic tangent space in \cite{KM}), and 
		it coincides with the internal tangent cone first described for Fr\"olicher spaces in \cite{DN2007-1}. See 
		the extensive description given in \cite{GMW2023} }
		
	There are other tangent spaces in the category of diffeological spaces: the diff-tangent space which is 
	defined in \cite{Ma2020-3} and that will be described in next parts of the exposition,
		the \textbf{internal tangent space} is defined in \cite{He1995,CW2014}, based on germs of paths. {{} This 
		second definition is necessary, and the internal tangent space differ from the internal tangent cone. 
		Indeed, spaces of germs do not carry intrinsically a structure of abelian group. This remark was first 
		formulated in the context of Fr\"olicher spaces, see \cite{DN2007-1}, and see \cite{CW2014} for the 
		generalization to diffeologies. For this reason, one can complete the tangent cone into a vector space, 
		called internal tangent space. This was performed in \cite{CW2014} via mild considerations on colimits in 
		categories. From another viewpoint, the \textbf{external tangent space} is spanned by derivations, and one 
		can define the cone of derivations which are defined by germs of paths \cite{Ma2013} as well as its vector 
		space completion in the space of derivations following \cite{GW2020}.  
			For finite-dimensional manifolds these tangent spaces coincide. For an extended presentation, we refer 
			to the review \cite{GMW2023}.
			
	The definition of the cotangent space $T^*X,$ as well as the definition of the algebra of differential forms 
	{{}$\Omega^*(X,\K)$,} does not carry so many ambiguities: it is defined through pull-back on each domain of plots, 
	see \cite{Igdiff} and e.g. \cite{Ma2013} for a comprehensive exposition. 
			
	\section{Profinite diffeological spaces and its limits} \label{s:profinite}
	
	In this section we follow \cite{AM2016} for the presentation of projective systems, but we complete it {{}with the help of} diffeological notions {{} while \cite{AM99} considers convenient calculus uh Fr\"olicher spaces}.
	  
	\begin{Definition} 
	A set $A$ equipped with a partial ordering $\leq$ is called a	{\bf directed set} if 
	$$ \forall (J,K) \in A^2, \exists R \in A (J \leq R \, \wedge \, K \leq R ) .$$
	\end{Definition}
	
	\begin{Definition} 
	Let us consider a family $${{}E=}\left\{E_J\right\}_{J \in A}$$ of (non-empty) sets
	labelled by the elements of a non-empty directed set A, called index
	set. We call such a family a \textbf{projective} or \textbf{inverse family} $(E,\pi)$ if 
	there exists a family of surjective maps
	$$\pi=\left\{{{}\pi_J^K:E_K \rightarrow E_J} \, | \, J \leq K\right\}$$ 
	satisfying the {\bf consistency property}
	$$ \forall (J,K,L) \in A^3,\, J \leq K \leq L \Rightarrow {{}\pi_J^L =  \pi_J^K \circ \pi_K^L}\; ,$$ 
	and such that 
	$$\forall K \in A,\, \pi_K^K  = Id_{E_K}.$$
	\end{Definition}
	
Projective (or, inverse) families can be considered in the category of sets or topological spaces, as well as in 
the category of diffeological spaces. They can be also defined in the category of smooth manifolds, 
but in this case the definition below, for instance, would not always make sense without the extended category of 
diffeologies. This is what we describe after.

	\begin{Definition}
	Let $(E,\pi)$ be a projective family. Let 
$$E^A = \left\{(x_J)_{j \in A} \in \prod_{J \in A} E_J \, | \, \forall J \leq K, \, x_J = \pi_J^K(x_K)\right\}.$$
	We call $E^A$ the \textbf{product limit set} of $(E,\pi).$ 
\end{Definition}

\begin{Definition}
	Let $(E,\pi)$ be a projective family.
	\begin{itemize}
		\item (see e.g. \cite{AM2016}) Let us assume that $\forall J \in A,$ $E_J$ is a topological space. Then 
		$(E,\pi)$ is a topological projective family if the family of maps $\pi$ is a family of continuous maps. 
	\item Let us assume that $\forall J \in A,$ $E_J$ is a diffeological space. Then $(E,\pi)$ is a diffeological
	 projective family if the family of maps $\pi$ is a family of smooth maps. 
	\end{itemize}
	\end{Definition}

\begin{example}
	Let $I$ be a non-empty set and let $\left\{E_i\right\}_{i \in I}$ ba a family of non-empty sets (``spaces'') 
	labelled by the elements of a non-empty set $I.$ Let $A = \mathfrak{P}_F (I)$ to be the set of finite subsets 
	of $I.$ {We define the partial ordering $\leq$ on $A$ as the inclusion,} and for $J \in A,$ we set 
	$E_J = \prod_{i \in J} E_i.$  {{} We also set 
		\begin{itemize}
			\item $E = \{E_J\}_{J \in A}$
		\item if $J \subset K,$ $\pi_J^K$ is the canonical projection from $E_K$ to $E_J.$
		\end{itemize}  
	
	If the sets $E_i$ are equipped with a topology (resp. diffeology), }
	then $(E,\pi)$ is a topological (resp. diffeological) projective family. 
\end{example}

	\begin{Definition}
		A \textbf{profinite} family is a projective family $(E,\pi)$ such that 
		\begin{enumerate}
			\item $\forall J \in A,$ $E_J$ is a finite-dimensional manifold.
			\item $\forall (J,K) \in A^2,$ if $J \leq K,$ there exists an injective map 
			      $i_K^J : E_J \rightarrow E_K$ such that $$\pi_J^K \circ i_K^J = Id_{E_J}.$$ 
            \item {$\forall (I,J,K) \in A^3 $ such that $I \leq J \leq K,$ 
                  $i^{J}_K \circ i^I_J = i^I_K.$}
		\end{enumerate} 
	We note by $(E,\pi,i)$ such a profinite family. A profinite family is called topological (resp. diffeological) 
	if 
	\begin{itemize}
		\item $(E,\pi)$ is a topological (resp. diffeological) projective family;
		\item the family of maps $i=\{ i_K^J:E_J \rightarrow E_K | J \leq K \}$ is a family of continuous 
		(resp. smooth) maps.
	\end{itemize}
	\end{Definition}
	
	{Profinite families admit not only product limits, but also inductive limits. We define the
	latter below, after some preliminaries.}
	
\begin{rem} 
	Let $(E,\pi, i)$ be a profinite family. We define a relation  $\sim$ on the coproduct $\coprod_{I \in A} E_I$ 
as follows: $x_J \in E_J \sim x_K \in E_K$ if and only if $\exists L \leq K, J$ and $\exists x_L \in E_L$ such
that
$$
 i^L_K(x_L)= x_K \mbox{ and } i^L_J(x_L)= x_J\, .
 $$  

This relation is obviously reflexive and symmetric. 
	We define a relation of equivalence on the directed set $A,$ that we note $\mathcal{R},$ by the following 
	property (with obvious notations): 
	
	$x_J \mathcal{R} x_L$ if there exists a finite sequence of indexes $(I_1,...,I_n)$ and a finite sequence  
	$(x_1,...x_n)$ such that 
	\begin{itemize}
		\item $I_1 = J,$ $I_n = K$
		\item $x_1 = x_J,$ $x_n = x_K$
		\item $\forall i \in \N_{n-1},$ $x_i \sim x_{i+1}.$ 
		\end{itemize}
	
\end{rem}

\begin{Definition} \label{sec}
	{{} Le $A$ be a directed set.} $\mathcal{S}  { \in}  \mathfrak{P}(A)-\{\emptyset\}$ is a \textbf{section} of $A$ if
	\begin{itemize}
		\item[(a)] $\forall (I,J)\in \mathcal{S}^2, I \neq J \Leftrightarrow (I \nleq J \wedge J \nleq I)$ 
		{
		(i.e., $S$ is an anti-chain for the partially ordered set $(A , \leq)$).}
		\item[(b)] $\forall I \in A, \exists S \in \mathcal{S}, (I \leq S \vee S \leq I).$
	\end{itemize}
We note by $Sec(A)$ the set of sections of $A$ and by {{}$Sec_F(A)$} the set of finite sections of $A$ 
(which may be void).
\end{Definition}.

\begin{Lemma}
$A$ is totally ordered if and only if $Sec(A)=A,$ that is, any section is of cardinality 1, and any element of $A$ 
defines a section.
\end{Lemma}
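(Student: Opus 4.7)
The plan is to unfold Definition \ref{sec}: a section $\mathcal{S}$ is a non-empty antichain of $(A,\leq)$ (by condition (a)) such that every element of $A$ is comparable to some element of $\mathcal{S}$ (by condition (b)). The equality $Sec(A)=A$ must be read under the identification $S \leftrightarrow \{S\}$, as the conjunction of two claims: every section has cardinality one, and conversely every singleton $\{S\}$ with $S\in A$ is a section. I will prove each direction separately by direct verification against these two conditions.

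For the direct implication ($A$ totally ordered $\Rightarrow Sec(A)=A$), I would first argue that any section is a singleton: if $I,J\in\mathcal{S}$ were distinct, then totality would force $I\leq J$ or $J\leq I$, contradicting condition (a). Since sections are by definition elements of $\mathfrak{P}(A)-\{\emptyset\}$, this forces $|\mathcal{S}|=1$. Conversely, for any $S\in A$, the singleton $\{S\}$ satisfies (a) vacuously, and satisfies (b) because totality of $\leq$ ensures every $I\in A$ is comparable to $S$. This gives both inclusions of the identification.

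For the converse ($Sec(A)=A\Rightarrow A$ totally ordered), the crucial half of the hypothesis is that every singleton $\{S\}$ is a section. Applying condition (b) to the section $\{S\}$ shows that every $I\in A$ is comparable to $S$, and since $S\in A$ is arbitrary, any two elements of $A$ are comparable, which is precisely totality of the order.

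There is essentially no serious obstacle here: the proof is a direct unpacking of Definition \ref{sec}. The only point requiring minor care is to make explicit the identification $S \leftrightarrow \{S\}$ implicit in the notation $Sec(A)=A$, and to observe that the half of the statement that really drives the reverse direction is ``every singleton is a section'', whereas the other half, ``every section is a singleton'', is automatic in a totally ordered setting from the antichain condition.
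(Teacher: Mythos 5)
Your proposal is correct. Both you and the paper proceed by direct unpacking of Definition \ref{sec}, and the forward direction is handled identically (totality kills antichains of size $>1$; singletons trivially satisfy (a) and satisfy (b) by totality). The converse, however, is organized differently. The paper takes two elements $P,Q\in A$, forms $S_0=\{P,Q\}$, uses the hypothesis that every section is a singleton to conclude that $S_0$ is \emph{not} a section, and then case-splits on which of (a) or (b) fails for $S_0$: the failure of (b) is ruled out by showing it would produce an element $I_0$ whose singleton $\{I_0\}$ violates (b), contradicting the other half of the hypothesis; so (a) must fail, which forces $P$ and $Q$ to be comparable. You instead apply condition (b) directly to the section $\{S\}$ for arbitrary $S\in A$, obtaining comparability of $S$ with every element in one step. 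Your route is shorter, avoids the proof by contradiction, and isolates the fact that only the half of the hypothesis ``every element of $A$ defines a section'' is actually needed for the converse --- the half ``every section is a singleton'' plays no role there, whereas the paper's argument invokes both halves. Both arguments are valid; yours is the more economical, and the observation about which half of $Sec(A)=A$ carries the logical weight is a genuine (if small) sharpening of the statement.
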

\begin{proof} {The left-to-right implication is clear. Conversely, let us assume that all
elements of $A$ determine a section and that the cardinality of every section is 1. This means that if 
$P,Q \in A$, we have two possibilities for the set $S_0 = \{ P,Q\}$: either (a) of Definition \ref{sec}
fails, or (b) of Definition \ref{sec} fails. If (a) fails, we obtain that $P$ and $Q$ are comparable.  
Assume that (b) fails. Then, there exists $I_0 \in A$ such that for all $S \in S_0$, $I_0$ and $S$
are incomparable. But this means that $\{I_0\}$ is not a section, a contradiction. We conclude that $A$ is
totally ordered, as claimed.}
\end{proof}

\begin{Definition}
	Let $\mathcal{S} \in Sec(A).$ Let $E_\mathcal{S}$ be the subset of $E^A$ defined by the following property: 
	
	$$ (x_I)_{I \in A} \in E_\mathcal{S} \Leftrightarrow \forall I \in A, \exists S \in \mathcal{S}, \left\{
	\begin{array}{lcl}x_I =  i^S_I(x_S) & \hbox{ if } & S \leq I \\
		x_I = \pi^S_I(x_S) & \hbox{ if } & I \leq S \end{array} \right.$$ 
	and we define the \textbf{ inductive limit set} $E_A$ as $$E_A = \bigcup_{\mathcal{S} \in Sec(A) } E_S.$$
\end{Definition}

\begin{example}
	Let $A = \N.$ and let $\leq$ be the usual total order of $\N.$ 
 Let $(E,\pi)$ be a projective family such that $\forall (j,k)\in \N^2, j \leq k \Rightarrow E_j \subset E_k$ and 
 $\pi_j^k$ is a projection from $E_k$ to $E_j.$ Then the {inductive} limit set of the projective 
 family $(E,\pi)$ is $$\cup_{n \in \N} E_n.$$ 
	This is the case when $E_n = \R_n[X],$ and where, for $n \in \N^*,$ the projection $\pi^n_{n-1}$ reads as
	$$\pi^{n}_{n-1}\left(\sum_{k=0}^n a_k X^k\right)= \sum_{k=0}^{n-1} a_k X^k.$$
	In that case, $E_A = \R[X]$ and $E^A = \R[[X]].$
\end{example}

\section{On the filter defined by sections and profinite distances}


We assume here that 
$$\forall J \in A, E_J \hbox{ is equipped with a distance } d_J$$
such that $$\forall (J,K) \in A^2, J \leq K \Rightarrow d_J = d_K \circ \left( i_K^J \times i_K^J \right).$$   

\begin{Definition}
	On $E_A$ and $E^A$ we define
	\begin{itemize}
		\item $d^\infty_E = \sup_{J \in A} \frac{d_J}{1+d_J}$
		\item if $\mu$ is a finite measure on $A,$ {{} we define the measurable map} $J \in A \mapsto 
		\frac{d_J(x,y)}{1+d_J(x,y)}$ for fixed $(x,y) \in E_A^2,$ then  
		$$ d_\mu = \int_A \frac{d_J}{1+d_J} d\mu(J).$$
	\end{itemize}

\end{Definition}

\begin{Theorem}
	$d^\infty_E$ and $d_\mu$ are pseudo-distances on $E^A$ and on $E_A.$ Moreover $d^\infty_E$ is a distance on 
	$E^A$ and on $E_A.$
\end{Theorem}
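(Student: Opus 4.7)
The plan is to deploy the classical truncation trick: the function $\phi\colon[0,\infty)\to[0,1)$ defined by $\phi(t)=t/(1+t)$ is strictly increasing, concave with $\phi(0)=0$, and subadditive, $\phi(a+b)\leq \phi(a)+\phi(b)$; hence for any metric $d$ on a set, $\phi\circ d$ is again a metric, bounded by $1$. Applied to each $d_J$, this gives bounded metrics $\tilde d_J := d_J/(1+d_J)$ on $E_J$ with values in $[0,1)$. First I would check well-definedness: for $x,y\in E^A$, we have $\tilde d_J(x_J,y_J)\in[0,1)$ for every $J$, so $d^\infty_E(x,y)\in[0,1]$; since $\mu$ is finite and the integrand is bounded by $1$, $d_\mu(x,y)\in[0,\mu(A)]$, while measurability of $J\mapsto\tilde d_J(x_J,y_J)$ is part of the standing hypothesis.

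Next I would verify the pseudo-metric axioms for both $d^\infty_E$ and $d_\mu$. Non-negativity and symmetry are inherited pointwise from each $\tilde d_J$. For the triangle inequality of $d^\infty_E$, given $x,y,z\in E^A$ and $J\in A$, one has
$$\tilde d_J(x_J,z_J)\leq \tilde d_J(x_J,y_J)+\tilde d_J(y_J,z_J)\leq d^\infty_E(x,y)+d^\infty_E(y,z),$$
and taking the supremum over $J$ concludes. Integrating the same pointwise inequality against $\mu$, using monotonicity and linearity of the integral, yields the triangle inequality for $d_\mu$.

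For the separation property of $d^\infty_E$: if $d^\infty_E(x,y)=0$, then $\tilde d_J(x_J,y_J)=0$ for every $J$, and since each $\tilde d_J$ is a genuine metric this forces $x_J=y_J$ for all $J$, whence $x=y$ in $E^A$. Because each $E_\mathcal{S}$ is by construction a subset of $E^A$ and $E_A=\bigcup_{\mathcal{S}\in Sec(A)} E_\mathcal{S}\subset E^A$, the same conclusion transfers to $E_A$.

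There is essentially no technical obstacle; the argument is pure verification once the truncation trick is in place. The main thing worth flagging is that the compatibility relation $d_J=d_K\circ(i_K^J\times i_K^J)$ is not really used at this level of generality — the result holds for any family of metrics indexed by $A$ — and that the set-theoretic inclusion $E_A\subset E^A$ is what makes the separation property on the ambient set transfer automatically to the inductive limit set.
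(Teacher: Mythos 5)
Your proof is correct and follows essentially the same route as the paper, which likewise reduces everything to the concavity/subadditivity of $\phi(t)=t/(1+t)$ and then declares the remaining verifications an easy exercise. In fact your write-up is more complete than the paper's, since you also verify the separation property making $d^\infty_E$ a genuine distance (a claim the paper's proof leaves unaddressed) and correctly observe that the compatibility condition on the $d_J$ is not needed here.
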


\begin{proof}
	We have that $\phi: \R_+ \rightarrow \R_+$ defined by $$ \phi(x) = \frac{x}{1 + x}$$ is concave and therefore, 
	if $d$ is any distance, $\phi \circ d$ is also a distance. This is then an easy exercise to prove that 
	$d^\infty_E$ and $d_\mu$ are pseudo-distances on $E^A$, and hence on $E_A.$  
\end{proof}

{
We notice here that we did not assume that the topology of each $E_J$ is the topology defined by the metric $d_J$. 
Intuitively speaking, this choice seems to be natural. However, {{}it} is not the only one. Let us give an 
example where each $d_J$ does not define the topology of $E_J.$

\begin{example}
    Let $A=\N,$ and $\forall J \in A, E_J=\R$ and let $d_J$ be the discrete distance. Let $\mu$ be the measure on 
    $\N$ such that $\mu(\{n\}) = \frac{1}{(n+1)^2}.$
    Then $d_\mu$ is an ultrametric distance on ${E_A} =\R^\infty$ and $E^A=\R^\N.$ Following \cite{ERMR}, the 
    construction of the diffeology of $E^A$ is possible. In this simple case, this is the product diffeology. 
\end{example}

{Let us construct
$$\mathfrak{B}(Sec(A)) = \left\{ B \subset A \, | \, \exists S \in Sec(A), \, \forall J \in A, (J \in B) 
\Leftrightarrow (\exists K \in S, K < J) \right\}.$$
The set $\mathfrak{B}$ is obviously a base of a filter $\mathfrak{F}$ on $A$ that we call the \textbf{filter of 
sections} of $A$.} 

The filter $\mathfrak{F}$ provides a natural filtration that seems to be related to the ultrametric distance 
$d_\mu$ provided in last example, but the direct relationship with the other distances needs to be analyzed more 
deeply. This precise study is left for future works.}

\section{Diffeologies on profinite families and their limit sets}

Let us fix a projective diffeological family $(E,\pi)$ indexed by a directed set $(A,\leq).$
We consider the product limit set $E^A.$ 
\begin{Definition}
	A diffeology $\p$ on $E^A$ is said to be a profinite diffeology if and only if the canonical injection 
	$$ (E^A,\p) \hookrightarrow \prod_{J \in A} E_J$$ is smooth for the product diffeology of 
	$\prod_{J \in A} E_J.$
\end{Definition}


\begin{Proposition}
	There is a unique profinite diffeology $\p^A$ which is maximal for inclusion. This diffeology coincides with 
	the subset diffeology of $E^A$ in $\prod_{J \in A} E_J.$    
\end{Proposition}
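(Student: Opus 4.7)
The plan is to identify the sought maximal profinite diffeology $\p^A$ explicitly with the subset diffeology $\p_{\mathrm{sub}}$ on $E^A$ inherited from the product diffeology on $\prod_{J \in A} E_J$, and then to observe that every profinite diffeology on $E^A$ sits inside it. This is a direct universal-property argument for initial (pull-back) structures, phrased in the language of plots.

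First I would verify that $\p_{\mathrm{sub}}$ is itself profinite. By Definition~\ref{d:diffeol subset}, $\p_{\mathrm{sub}}$ is by construction a diffeology on $E^A$ (the three axioms of Definition~\ref{d:diffeology} are inherited from the product), and it consists precisely of those parametrisations $p : U \to E^A$ whose composition with the canonical inclusion $\iota : E^A \hookrightarrow \prod_{J \in A} E_J$ is a plot of the product diffeology. This is exactly the condition that $\iota : (E^A, \p_{\mathrm{sub}}) \to \prod_{J \in A} E_J$ be diffeologically smooth, so $\p_{\mathrm{sub}}$ satisfies the defining condition of a profinite diffeology. Next I would show that any profinite diffeology $\p$ on $E^A$ satisfies $\p \subseteq \p_{\mathrm{sub}}$: given $p \in \p$, profiniteness of $\p$ asserts that $\iota \circ p$ is a plot of the product, whence $p$ is by definition a plot of $\p_{\mathrm{sub}}$.

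Combining the two steps, $\p_{\mathrm{sub}}$ is a profinite diffeology that contains every profinite diffeology, so it is the (necessarily unique) maximum among them, and we may set $\p^A := \p_{\mathrm{sub}}$. I do not anticipate a serious obstacle: the argument is formal and relies only on the plot-wise characterisations of Definitions~\ref{d:diffeol product} and~\ref{d:diffeol subset}, together with the equivalent description $\p = \bigcap_{J \in A} \pi_J^\ast(\p_J)$ of the product diffeology recorded just after Definition~\ref{d:diffeol product}. The only mild point to keep in mind is the bookkeeping needed to see that the three definitions—subset diffeology, product diffeology, and profinite diffeology—match up exactly as claimed, so that no additional axiom verification is required.
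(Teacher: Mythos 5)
Your proof is correct, but it takes a more direct route than the paper. The paper first invokes Zorn's lemma on the set of profinite diffeologies ordered by inclusion (noting that the discrete diffeology is profinite so the set is non-empty, and that the supremum of a chain of profinite diffeologies is profinite) to obtain \emph{some} maximal element $\p^A$, and only afterwards identifies it with the subset diffeology by a two-sided inclusion. You instead show directly that the subset diffeology $\p_{\mathrm{sub}}$ is itself profinite and that it \emph{contains} every profinite diffeology, i.e.\ it is the greatest element of the poset; maximality and uniqueness then follow at once, with no appeal to Zorn's lemma. The key computation is the same in both arguments (a parametrisation $p$ belongs to a profinite diffeology only if $\iota \circ p$ is a plot of the product, which is exactly membership in the subset diffeology), and indeed the second half of the paper's proof already contains your entire argument, which makes the Zorn step logically redundant there. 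Your version is therefore both more elementary and slightly stronger in what it establishes (a maximum rather than a mere maximal element), at no extra cost. One small point worth making explicit in your write-up is that the ``profiniteness'' condition on a diffeology $\p$ is precisely the smoothness of $\iota : (E^A,\p) \to \prod_{J\in A} E_J$, i.e.\ the plot-wise condition $\iota \circ p \in \bigcap_{J\in A}\pi_J^*(\p_J)$ for all $p \in \p$; you do state this, so no gap remains.
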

\begin{proof} 
The discrete diffeology {{} ({\em i.e.}, the diffeology for which the only plots are the locally constant maps,} is profinite. Therefore, we can apply Zorn's lemma to the set $\p(E)$ of profinite 
diffeologies {{} equipped with the partial order defined by inclusion}: if we have an increasing chain of profinite diffeologies, then the supremum {{} {i.e.}, the diffeology generated by the union of the diffeologies in the chain,} is also a  profinite 
diffeology, and therefore $\p(E)$ has a maximal element $\p^A.$
{{}Since the subset diffeology $S^A$ is a profinite diffeology, we have $S^A \subset \p^A$. On the other hand, if $p:O_p \rightarrow E^A$ is a plot in $\p^A$, then $\iota \circ p$ is smooth in the product diffeology, and so $p$ is a plot with range $E^A$ that smooth in the product, this is, $p$ belongs to the subset diffeology. Hence $S^A= \P^A$, which ends the proof.}
\end{proof}

Now, since $E_A$ is a subset of $E^A,$ we can equip $E_A$ with the subset diffeology. We get:

\begin{Proposition}
	The profinite diffeology $\p^A$ on $E^A$ defines a diffeology on $E_A$ that we denote by $\p_A.$
\end{Proposition}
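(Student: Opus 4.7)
The plan is to recognize that this proposition is an immediate application of the subset diffeology construction (Definition \ref{d:diffeol subset}). The sentence just preceding the statement already hints at the mechanism: once we know $E_A \subseteq E^A$, the diffeology $\p^A$ automatically restricts to a diffeology on $E_A$, and there is nothing more to invent.

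First I would verify the inclusion $E_A \subseteq E^A$ by unfolding the definitions. For each section $\mathcal{S} \in Sec(A)$, the set $E_\mathcal{S}$ was introduced explicitly as ``the subset of $E^A$'' consisting of families $(x_I)_{I\in A}$ satisfying the compatibility conditions involving the $i_I^S$ and $\pi_I^S$ maps. Thus $E_\mathcal{S} \subseteq E^A$ by construction, and consequently
\[
E_A = \bigcup_{\mathcal{S} \in Sec(A)} E_\mathcal{S} \subseteq E^A.
\]

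Next I would set
\[
\p_A = \{\, p \in \p^A \mid \mathrm{Im}(p) \subseteq E_A \,\},
\]
and check that the three axioms of Definition \ref{d:diffeology} transfer from $\p^A$ to $\p_A$. This is routine: the Covering axiom holds because constant plots valued in a point of $E_A$ automatically have image in $E_A$; Locality holds because having image in $E_A$ is a pointwise condition and therefore survives gluing from local pieces; and Smooth Compatibility holds because precomposing a plot $p$ with a smooth map $F$ can only shrink, never enlarge, the image. In other words, each of the three conditions is manifestly stable under restriction to plots valued in a fixed subset, so $\p_A$ is genuinely a diffeology.

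I do not anticipate a serious obstacle; the claim is essentially tautological once the subset diffeology and the preceding proposition on $\p^A$ are in hand. The only mild subtlety worth acknowledging is that for a pathological directed set $A$ one could a priori have $E_A = \emptyset$, but the empty set carries a unique (empty) diffeology, so even in that extreme case $\p_A$ remains well defined.
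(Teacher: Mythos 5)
Your proposal is correct and matches the paper's (implicit) argument exactly: the paper simply notes that $E_A$ is a subset of $E^A$ and invokes the subset diffeology of Definition \ref{d:diffeol subset}, giving no further proof. Your explicit verification of the inclusion $E_A=\bigcup_{\mathcal{S}}E_{\mathcal{S}}\subseteq E^A$ and of the three diffeology axioms for the restricted family of plots is the routine content the paper leaves to the reader.
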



\section{Smooth profinite operations and fibrations} \label{s:fibr}

{
\begin{Definition}
    Let $(X,\pi,i)$ and $(X',\pi',i')$ be two profinite diffeological families indexed by $A.$ A smooth map from 
    $X$ to $X'$ is a collection $f = (f_J)_{J \in A}$ of smooth mappings (in the classical sense) 
    $f_J : X_J \rightarrow X'_{J'}$ and an order preserving map $inf(f):A \rightarrow A$ such that {{} we have the following commutative diagram:}

    \begin{center}
    \begin{tikzcd}
X_K \arrow[r, "f_K"] \arrow[d, "\pi_J^K"]
&  X'_{K'}\arrow[d, "{\pi'}_{J'}^{K'}"] \\
X_J \arrow[r, "f_J"]
& X'_{J'}
\end{tikzcd}
\end{center}
    {{} for $J \leq K$ in $A, J' = ind(J)$ and $K'=ind(K).$ }

    We denote by $C^\infty(X,X')$ the set of such families $f = (f_J)_{J \in A}$ of smooth maps.
\end{Definition}

The set
    \[
    S = \left\{C^\infty(X,X') \, | X \hbox{ and } X' \hbox{ are profinite families indexed by } A \right\} 
    \]
    is an associative groupoid for composition, and hence $C^\infty(X,X)$ is an associative monoid for 
    composition, with neutral element. It is then natural to define a diffeology on $C^\infty(X,X')$ by taking 
    plots $p = (p_J)_{j \in A}$, defined on a fixed open subset $O_p$ of an Euclidean space, to be a family 
    $(p_J)_{j \in A}$ such that $$ \forall J \in A, p_J \in C^\infty(O, C^\infty(X_J, X_{ind(p)(J)} )) \hbox{ for 
    the functional diffeology. }$$ 
    Moreover, by virtue of the properties of the functional diffeology, {both the set $S$ and the 
    space $C^\infty(X,X)$}
    carry natural diffeologies compatible with composition. Thus, these two spaces are respectively a 
    diffeological associative groupoid 
    and a diffeological monoid. Finally, we can define the group of diffeomorphisms of the profinite family $X$ 
    as 
$$ 
Diff(X) = \left\{ f \in C^\infty(X,X) \, | \, \exists g \in C^\infty(X,X), g \circ f = f \circ g = Id_X \right\}.
$$

\begin{rem} 
A diffeomorphism of the profinite family $X$ does not define index-wise a diffeomorphism $f_J \in Diff(X_J),$ 
$\forall J \in A .$ As a counterexample, let us consider the profinite family composed of four manifolds indexed 
by $A=\{I,J,K,L\}:$
\begin{itemize}
    \item $X_I = \{(0,0)\}$
    \item $X_J = \{(x,0) \, | \, x \in \R \}$
    \item $X_K = \{(0,y) \, | \, y \in \R \}$
    \item $X_L = \{(x,y) \, | \, x \in \R \hbox{ and } y \in \R\},$
\end{itemize}
where the projections and injections are the canonical ones, and the order on $A$ is the order induced by 
inclusion of subsets. Then the orthogonal symmetry on $X_L$ with respect to $\{x=y\}$ defines a profinite 
(involutive) diffeomorphism $f$ such that $f_J : X_J \rightarrow X_K$ and $f_K : X_K \rightarrow X_J.$
\end{rem}

\begin{Lemma}
    Let $(X_\alpha,\pi_\alpha,i_\alpha)_{\alpha \in \Lambda}$ be a set, indexed by the finite set $\Lambda,$ of 
    profinite diffeological families indexed by $A.$ Then the product projections 
    $\pi_{J}^K = \prod_{\alpha \in \Lambda} (\pi_{\alpha})_J^K $ and the product injections 
    $i_{K}^J = \prod_{\alpha \in \Lambda} (i_{\alpha})_K^J, $ for $J \leq K$ in $A,$ define a profinite 
    diffeological product family $$\left(\prod_{\alpha \in \Lambda}X_\alpha,\pi,i\right)$$ and 
    \begin{itemize}
    \item $\left(\prod_{\alpha \in \Lambda} X_\alpha \right)^A = 
    \prod_{\alpha \in \Lambda} \left(\left( X_\alpha\right)^A\right) ,$
    \item $\left(\prod_{\alpha \in \Lambda} X_\alpha \right)_A = 
    \prod_{\alpha \in \Lambda} \left(\left( X_\alpha\right)_A \right),$ \end{itemize}
\end{Lemma}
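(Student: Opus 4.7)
I would split the argument into three steps, corresponding to the three assertions of the lemma: the profinite family structure, the product-limit identity, and the inductive-limit identity.

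First, since $\Lambda$ is finite, each $Y_J := \prod_{\alpha \in \Lambda}(X_\alpha)_J$ is a finite-dimensional manifold. The consistency $\pi_J^L = \pi_J^K \circ \pi_K^L$, the retraction $\pi_J^K \circ i_K^J = Id$, the chain property $i_K^J \circ i_J^I = i_K^I$, and the smoothness of all the structure maps follow by reading each identity in every coordinate $\alpha$, since $\pi$ and $i$ are defined componentwise. Hence $(\prod_\alpha X_\alpha, \pi, i)$ is a profinite diffeological family indexed by $A$.

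Second, for the product-limit identity, a family $(y_J)_J$ with $y_J = (y_{\alpha,J})_{\alpha}$ satisfies $y_J = \pi_J^K(y_K)$ if and only if each component $(y_{\alpha,J})_J$ is coherent in $(X_\alpha)^A$, since the product projections act componentwise. This yields the set-level identification. At the diffeological level, recall that the previous proposition identifies $\p^A$ with the subset diffeology inherited from $\prod_J Y_J$; the canonical index-swap gives a diffeomorphism $\prod_{J\in A}\prod_{\alpha\in\Lambda} (X_\alpha)_J \cong \prod_{\alpha\in\Lambda}\prod_{J\in A} (X_\alpha)_J$, and since the subset diffeology on a product of subsets coincides with the product of subset diffeologies, the profinite diffeology on the product agrees with the product of the profinite diffeologies.

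Third, the inductive-limit identity is the delicate part. The inclusion $(\prod_\alpha X_\alpha)_A \subseteq \prod_\alpha (X_\alpha)_A$ is immediate: a point in $Y_\mathcal{S}$ projects componentwise to a point in each $(X_\alpha)_\mathcal{S}$ using the \emph{same} section $\mathcal{S}$. For the converse, given $(y_\alpha)_\alpha$ with $y_\alpha \in (X_\alpha)_{\mathcal{S}_\alpha}$, I would produce a single section $\mathcal{S}$ witnessing all the $y_\alpha$ simultaneously. The key auxiliary step is a monotonicity lemma: introducing the preorder $\mathcal{S} \preceq \mathcal{S}'$ meaning that every $S \in \mathcal{S}$ lies below some $S' \in \mathcal{S}'$, one uses the identities $\pi_J^K \circ i_K^J = Id$ and $i_K^J \circ i_J^I = i_K^I$ to show that $(X_\alpha)_\mathcal{S} \subseteq (X_\alpha)_{\mathcal{S}'}$ whenever $\mathcal{S} \preceq \mathcal{S}'$.

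The principal obstacle is the construction of a common upper bound in this preorder for the finitely many sections $\mathcal{S}_\alpha$. Reducing by induction to two sections $\mathcal{S}_1, \mathcal{S}_2$, one uses the directedness of $A$ to pick upper bounds of pairs $(S_1, S_2) \in \mathcal{S}_1 \times \mathcal{S}_2$, then prunes to an antichain and verifies that the result still satisfies the covering axiom of Definition \ref{sec}. Once such a common section is obtained, the reverse set-theoretic inclusion follows, and the diffeological equality is inherited from the ambient subset diffeology in $\prod_\alpha (X_\alpha)^A$ already identified in the second step.
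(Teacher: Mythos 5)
Your first two steps are correct and, in fact, supply details that the paper omits entirely: the paper's own proof of this lemma is the single word ``Direct.'' The componentwise verification of the profinite axioms (using finiteness of $\Lambda$ so that $\prod_\alpha (X_\alpha)_J$ is again a finite-dimensional manifold), the index-swap identification of the product limit sets, and the observation that the subset diffeology of a product of subsets agrees with the product of the subset diffeologies are all sound, and they match what ``direct'' presumably means here. You have also correctly isolated the one assertion that is \emph{not} direct, namely the reverse inclusion $\prod_\alpha (X_\alpha)_A \subseteq \bigl(\prod_\alpha X_\alpha\bigr)_A$, which requires replacing the finitely many sections $\mathcal{S}_\alpha$ by a single common one.

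It is in the resolution of that last point that your sketch has genuine gaps. First, the monotonicity lemma $E_{\mathcal{S}} \subseteq E_{\mathcal{S}'}$ for $\mathcal{S} \preceq \mathcal{S}'$ is not a consequence of the paper's definitions as written: for an index $I$ lying above some $S' \in \mathcal{S}'$, one must show $x_I = i_I^{S'}(x_{S'})$, and unwinding the condition defining $E_{\mathcal{S}}$ leads to comparing $i_I^{S}(x_S)$ and $i_I^{T}(x_T)$ for two \emph{incomparable} elements $S, T$ of $\mathcal{S}$ below $I$; the existential quantifier in the definition of $E_{\mathcal{S}}$ only guarantees the identity for \emph{one} such element, so the two injected values need not agree. (When $A$ is totally ordered every section is a singleton, the chain property $i_K^J \circ i_J^I = i_K^I$ closes the argument, and your lemma is true; the general partially ordered case is where it breaks.) Second, ``prune to an antichain and verify the covering axiom'' is not an effective construction: the set of pairwise upper bounds of $\mathcal{S}_1 \times \mathcal{S}_2$ has no distinguished antichain inside it, and an antichain extracted from it need neither satisfy condition (b) of Definition \ref{sec} nor dominate $\mathcal{S}_1 \cup \mathcal{S}_2$ in your preorder. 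A cleaner starting point would be the observation that sections are exactly the \emph{maximal} antichains of $(A,\leq)$ (if some $I$ were incomparable to all of $\mathcal{S}$, then $\mathcal{S}\cup\{I\}$ would be a strictly larger antichain), so Zorn's lemma produces sections containing any given antichain; but even then, arranging that the resulting section dominates both $\mathcal{S}_1$ and $\mathcal{S}_2$ simultaneously requires an argument you have not given. Since the paper offers no proof at all, these are not deviations from the authors' route but genuine holes in yours; as it stands, your step three establishes the third bullet only for totally ordered $A$ (or, more generally, when common dominating sections are known to exist and indices sit above at most one element of each section).
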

\begin{proof}
    Direct.
\end{proof}

This result means that profinite diffeological monoids and groupoids can be defined in general, and therefore we 
can consider families of groups, modules, fields, rings, algebras and other straightforward specifications of 
algebraic structures. However we propose here a restricted class of profinite objects, that we call \textbf{tame} 
in case of necessity to distinguish them from more complex structures. 


\begin{Lemma} \label{lemma-monoid}
 Let $(M, \pi, i)$, $M= \{ M_J \}_{J \in A}$, be a profinite diffeological family indexed by $A,$ such that 
 $\forall J \in A,$ $M_J$ is a 
 diffeological monoid (resp. associative monoid). If $\pi$ is a family of morphisms of diffeological monoids, then 
 $M^A$ and $M_A$ are diffeological monoids (resp. associative monoids).    
\end{Lemma}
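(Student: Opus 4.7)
The plan is to define the monoid operation on $M^A$ coordinate-wise and verify compatibility with the projective system using the morphism hypothesis on the bonding maps $\pi_J^K$, then read smoothness off the defining property of the profinite diffeology. The statement for $M_A$ will follow by restriction once closure is established.

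First I would set, for $x = (x_J)_{J \in A}, y = (y_J)_{J \in A} \in M^A$, the product $(x \cdot y)_J := x_J \cdot_J y_J$, where $\cdot_J$ is the law of $M_J$. The tuple $x \cdot y$ lies in $M^A$ because, for $J \leq K$,
\[
\pi_J^K\bigl((x \cdot y)_K\bigr) = \pi_J^K(x_K \cdot_K y_K) = \pi_J^K(x_K) \cdot_J \pi_J^K(y_K) = x_J \cdot_J y_J,
\]
which uses precisely the assumption that $\pi_J^K$ is a morphism of monoids. Associativity (in the associative case) and the existence of a unit $(e_J)_{J \in A}$ transfer componentwise from each $M_J$, the compatibility $\pi_J^K(e_K) = e_J$ again coming from the morphism hypothesis.

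Second, I would check that the resulting operation $\mu : M^A \times M^A \to M^A$ is smooth for the profinite diffeology $\p^A$. By the proposition identifying $\p^A$ with the subset diffeology inherited from $\prod_{J \in A} M_J$, smoothness of $\mu$ amounts to smoothness of $\pi_J \circ \mu = \cdot_J \circ (\pi_J \times \pi_J)$ for every $J \in A$, where $\pi_J : M^A \to M_J$ is the canonical projection. This is a composition of smooth maps: $\pi_J$ is smooth by construction of the profinite diffeology, and $\cdot_J$ is smooth because $M_J$ is a diffeological monoid. Hence $M^A$ is a diffeological monoid (resp.\ associative monoid).

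Finally, for $M_A$ equipped with the subset diffeology inherited from $M^A$, smoothness of multiplication is automatic once one knows that $M_A$ is stable under $\mu$. For $x \in E_{\mathcal{S}_1}$ and $y \in E_{\mathcal{S}_2}$ I would construct a common section $\mathcal{S}_3$ and verify that $x \cdot y \in E_{\mathcal{S}_3}$. The hard part will be precisely this closure step: transporting the section datum through the monoid operation requires that the injections $i_K^J$ also intertwine the laws, so that relations of the form $x_I = i_I^S(x_S)$ and $y_I = i_I^S(y_S)$ give $(x \cdot y)_I = i_I^S\bigl((x \cdot y)_S\bigr)$. This compatibility of the injections with the monoid structure is the natural companion hypothesis in the tame profinite setting and is where the genuine content of the $M_A$ part of the lemma lies.
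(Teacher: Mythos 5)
Your argument for $M^A$ is essentially the paper's proof: the operation is defined coordinate-wise, and the only thing to check is that $\pi_J^K(x_K \vdash_K y_K) = x_J \vdash_J y_J$, which is exactly the morphism hypothesis on $\pi$. You go one step further than the paper by explicitly verifying smoothness of the product map through the identification of $\p^A$ with the subset diffeology of $\prod_{J\in A} M_J$ (so that smoothness reduces to smoothness of $\cdot_J \circ (\pi_J \times \pi_J)$ for each $J$); the paper leaves this implicit, and your version is the more complete one.

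Your reservation about $M_A$ points at a genuine gap --- but note that it is a gap in the paper's proof as much as in yours. The paper disposes of the inductive limit with ``the same arguments hold,'' yet the defining condition for membership in $E_{\mathcal S}$ involves the injections: for $S \leq I$ one needs $(x\cdot y)_I = i_I^S\bigl((x\cdot y)_S\bigr)$, i.e.\ $i_I^S(x_S)\cdot_I i_I^S(y_S) = i_I^S(x_S \cdot_S y_S)$, which is precisely the statement that $i_I^S$ is a monoid morphism. This does not follow from the stated hypothesis, which only requires $\pi$ to be a family of morphisms (and $\pi_J^K \circ i_K^J = \mathrm{Id}$ does not force $i_K^J$ to be a morphism, since $\pi_J^K$ need not be injective). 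There is a second issue you gesture at but should make explicit: $x$ and $y$ may be witnessed by different sections $\mathcal S_1 \neq \mathcal S_2$, and while one can push a section ``upward'' using the cocycle relation $i_I^{S'} \circ i_{S'}^S = i_I^S$, the directedness of $A$ does not by itself produce a common section $\mathcal S_3$ that is an antichain satisfying condition (b) of the definition. So the honest conclusion is: the $M^A$ half of the lemma is proved; the $M_A$ half requires the additional hypothesis that $i$ is also a family of monoid morphisms (natural in the ``tame'' setting the paper advocates) together with an argument producing a common section, and neither your proposal nor the paper supplies the latter.
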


\begin{proof}
    Let us start with $(x,y) \in M^A \times M^A$ 
    and let $\vdash$ 
    denote the family of monoid laws, i.e. $\vdash\; = (\vdash_J)_{J \in A}$ and  $\forall J \in A,$ the monoid 
    law of $M_J$ is $\vdash_J.$
    Let us consider $x \vdash y,$ that is, 
    let us consider the family of operations $\{ x_J \vdash_J y_J \, | \, J \in A \}.$ 
    Since $\forall (J,K) \in A^2$, $J \leq K$, the map $\pi_J^K$ is a morphism, we have that 
    $\pi_J^K(x_K \vdash_K y_K) = x_J \vdash_J y_J.$ 
    Therefore, $\vdash$ is a well-defined operation $M^A \times M^A \rightarrow M^A.$
    This shows the structure of monoid of $M^A,$ and the same arguments hold for the structure of $M_A.$
\end{proof}
}
\begin{Theorem} \label{thm65}
	Assume that 
	\begin{enumerate}
		\item There exists a profinite family of rings $(\mathbf{R}, \pi, i)$ indexed by {{} a directed set} $A$ such that the 
		projection maps $\pi$ and the injection maps $i$ are morphisms of rings
		\item The family $(E,\pi)$ is a projective family of modules over $\mathbf{R},$ that is, 
		$\forall J \in A,$ $E_J$ is a $\mathbf{R}_J-$module. 
	\end{enumerate}
Then 
\begin{enumerate}
    \item 
    $\mathbf{R}^A$ is a ring, and $E^A$ is a { $\mathbf{R}^A$-module}. If, moreover, the algebraic 
    structures of each $J \in A$ are diffeological, then $E^A$ is a diffeological 
    { $\mathbf{R}^A$-module.}
    \item The same hold for $\mathbf{R}_A$, and $E_A$ is a $\mathbf{R}_A$-module. If moreover the algebraic 
    structures of each $J \in A$ are diffeological, then $E_A$ is a diffeological 
    {$\mathbf{R}_A$-module.} 
\end{enumerate}
\end{Theorem}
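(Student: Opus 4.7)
The plan is to mimic the proof of Lemma \ref{lemma-monoid} by treating the ring $\mathbf{R}^A$ and the module $E^A$ as pointwise (index-wise) algebraic objects inside the product $\prod_{J \in A} \mathbf{R}_J$ and $\prod_{J \in A} E_J$, then using the product/subset diffeology to transfer smoothness, and finally restricting everything to the inductive limit sets $\mathbf{R}_A$ and $E_A$.

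First I would deal with the ring structure on $\mathbf{R}^A$. Applying Lemma \ref{lemma-monoid} to the additive monoid of $(\mathbf{R},\pi,i)$ gives an additive monoid (in fact an abelian group, since taking opposites is itself a morphism of the additive groups and hence compatible with the $\pi_J^K$), and applying it again to the multiplicative monoid yields the product. Distributivity and the ring axioms for $\mathbf{R}^A$ follow componentwise from the same axioms in every $\mathbf{R}_J$, since each $\pi_J^K$ is a ring morphism and so the ``output'' of any algebraic identity already lives in $\mathbf{R}^A$. The same argument, now using that the injections $i_K^J$ are ring morphisms, yields the ring structure on $\mathbf{R}_A$ viewed as a subset of $\mathbf{R}^A$.

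Next I would define the scalar action on $E^A$ fiberwise by $(r\cdot x)_J := r_J\cdot_J x_J$ for $(r,x)\in \mathbf{R}^A\times E^A$, and check that $r\cdot x$ again belongs to $E^A$. This is the consistency computation
\[
\pi_J^K(r_K\cdot_K x_K) \;=\; \pi_J^K(r_K)\cdot_J \pi_J^K(x_K) \;=\; r_J\cdot_J x_J,
\]
which uses that each $\pi_J^K$ is a morphism of $\mathbf{R}_K$-modules when $E_J$ is viewed as an $\mathbf{R}_K$-module via the ring morphism $\pi_J^K$; this is exactly the content of saying that $(E,\pi)$ is a projective family of modules over $(\mathbf{R},\pi)$. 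The module axioms for $E^A$ then hold pointwise. The diffeological part follows because $\mathbf{R}^A$ and $E^A$ carry the subset diffeology inherited from the products: a plot into $\mathbf{R}^A\times E^A$ is smooth iff each composition with $\pi_J\times \pi_J$ is smooth into $\mathbf{R}_J\times E_J$, and then smoothness of the action $\mathbf{R}_J\times E_J\to E_J$ together with the universal property of the product diffeology gives smoothness of the global action into $\prod_J E_J$, whose image lies in $E^A$.

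Finally I would handle the inductive limit. Observe that $E_A\subset E^A$ and $\mathbf{R}_A\subset\mathbf{R}^A$ carry subset diffeologies, so all that remains is to verify that the module action restricts. For $r\in \mathbf{R}_A$ supported on a section $\mathcal{S}$ and $x\in E_A$ supported on a section $\mathcal{S}'$, one writes $(r\cdot x)_I$ in terms of the distinguished values $r_S$ and $x_{S'}$ using the defining relations of $E_{\mathcal{S}}$ and $\mathbf{R}_{\mathcal{S}'}$, and uses that each $i^J_K$ is a ring (resp. module) morphism to express $r\cdot x$ as an element of $E_{\mathcal{T}}$ for a common refining section $\mathcal{T}$; this is the exact analogue of the corresponding step for $M_A$ in Lemma \ref{lemma-monoid}, applied in parallel to the ring and the module.

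The main obstacle I expect is this last point: the set of sections $\mathrm{Sec}(A)$ is only a set of antichains, not stable under the naive ``union'' operation, so producing a single section $\mathcal{T}$ that simultaneously supports $r$ and $x$ (and hence $r\cdot x$) requires a careful invocation of the equivalence relation $\mathcal{R}$ introduced before Definition \ref{sec} and of the directedness of $A$. Everything else is either a componentwise algebraic verification or a direct application of the universal property of the product diffeology.
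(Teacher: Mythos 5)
Your proposal is correct and takes essentially the same route as the paper: Lemma \ref{lemma-monoid} handles the internal (monoid) operations, the scalar action is verified index-wise via the computation $\pi_J^K(r_K\cdot_K x_K)=\pi_J^K(r_K)\cdot_J\pi_J^K(x_K)=r_J\cdot_J x_J$, and smoothness follows from the product/subset diffeology. The obstacle you flag at the end --- producing a single section of $A$ supporting $r\cdot x$ when $r\in\mathbf{R}_A$ and $x\in E_A$ are supported on different sections, so that the action actually closes on the inductive limit --- is indeed the only delicate step, and you should be aware that the paper's own proof does not carry it out either (it reduces everything to the same index-wise morphism computation and asserts the $E_A$ case in one line); so this is a gap in the published argument that you have correctly identified rather than a defect specific to your proposal.
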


\begin{proof}
	{The sets $E^A$ and $\mathbf{R}^A$ are already well-defined as projective limits. Only the 
	{limits} of the algebraic structures have to be checked to be well-defined. 
    
    For this, Let us start with $(x,y) \in X \times Y$ with 
    $$(X,Y) \in \{(\mathbf{R}_A, \mathbf{R}_A),(\mathbf{R}^A, \mathbf{R}^A), (\mathbf{R}_A, E_A),
    (\mathbf{R}^A, E^A),(E_A, E_A),(E^A, E^A)  \}.$$ 
    If $X=Y,$ then Lemma \ref{lemma-monoid} applies to all the monoidal structures under consideration. If 
    $(X,Y) \in \{(\mathbf{R}_A, E_A),(\mathbf{R}^A, E^A)\}$ we have to check the compatibility under the profinite 
    sequence of projections under the same procedure as before. Let $\vdash$ 
    denote the family of monoid laws, i.e. $\vdash = (\vdash_J)_{J \in A}$ such that  $\forall J \in A,$ the law 
    of $M_J$ is $\vdash_J.$
    Let us consider $x \vdash y,$ that is, 
    let us consider the family of operations $\{ x_J \vdash_J y_J \, | \, J \in A \}.$ 
    Since $\forall (J,K) \in A^2, J \leq K \Rightarrow \pi_J^K$ is a morphism, then 
    $\pi_J^K(x_K \vdash_K y_K) = x_J \vdash_J y_J.$ 
    This shows the structure of $\mathbf{R}^A-$ module of $E^A,$ and  the structure of $\mathbf{R}_A-$ module of 
    $E_A.$}
\end{proof}
{
\begin{Lemma}
 Let $(G, \pi, i)$ be a profinite diffeological family indexed by $A,$ such that $\forall J \in A,$ $G_J$ is a diffeological group. If $\pi$ is a family of morphisms of diffeological groups, then $G^A$ and $G_A$ are diffeological groups.    
\end{Lemma}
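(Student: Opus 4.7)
The plan is to reduce to the previous Lemma \ref{lemma-monoid} as much as possible, and then add only what is specific to the passage from monoid to group, namely the componentwise inversion.

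First, I would observe that every diffeological group is in particular a diffeological associative monoid with identity, and a morphism of diffeological groups is automatically a morphism of diffeological monoids. Under the hypothesis that $\pi$ consists of group morphisms, Lemma \ref{lemma-monoid} therefore already applies to $(G,\pi,i)$ viewed as a family of diffeological monoids, and grants that $G^A$ and $G_A$ are diffeological associative monoids for the profinite (resp.\ induced) diffeology, with neutral element given by the coherent family $(e_J)_{J\in A}$ of units. This disposes of multiplication and identity at once.

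Second, I would define the candidate inversion on $G^A$ componentwise by $\iota((x_J)_{J\in A})=(x_J^{-1})_{J\in A}$. Well-definedness as a map into $G^A$ reduces to the projective compatibility: for $J\leq K$ in $A$, since $\pi_J^K$ is a morphism of groups one has
\[
\pi_J^K(x_K^{-1}) = \bigl(\pi_J^K(x_K)\bigr)^{-1} = x_J^{-1},
\]
so $\iota(x)\in G^A$ whenever $x\in G^A$. Smoothness is then immediate: the map $\prod_{J\in A}\iota_J$ is smooth for the product diffeology on $\prod_{J\in A}G_J$ because each $\iota_J$ is smooth in the diffeological group $G_J$, and $G^A$ carries the subset diffeology inherited from this product, so the restriction of $\prod_J\iota_J$ to $G^A$ is smooth. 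Together with the monoidal structure inherited from Lemma \ref{lemma-monoid}, this makes $G^A$ a diffeological group.

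Third, I would transfer the conclusion to $G_A$, which by construction is a diffeological subset of $G^A$. It suffices to verify that $G_A$ is stable under the operations of $G^A$. The coherent family of units lies in $E_\mathcal{S}$ for every $\mathcal{S}\in Sec(A)$ because both $\pi^S_I$ and $i^S_I$ preserve the identity (the former by the morphism assumption, the latter because it is a section of a group morphism, so $i^S_I(e_S)=e_I$). For stability under multiplication and inversion one checks, on each $E_\mathcal{S}$, the defining relations using $\pi^S_I$ and $i^S_I$ together with the morphism property of $\pi$: for $I\leq S$, $\pi^S_I(x_S y_S)=\pi^S_I(x_S)\pi^S_I(y_S)$ and $\pi^S_I(x_S^{-1})=\pi^S_I(x_S)^{-1}$; the case $S\leq I$ is handled by reading off $x_S=\pi^S_I(x_I)$ from $\pi^S_I\circ i^S_I=Id_{G_S}$ and applying the same morphism identities.

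The core of the argument is routine; the main point demanding attention is precisely the stability of the subset $G_A\subset G^A$ under inversion, because the definition of $E_\mathcal{S}$ mixes clauses involving $\pi$ and $i$, and only $\pi$ is assumed to be a group morphism. That is the only place where some care is needed; everywhere else the proof is an immediate replay of Lemma \ref{lemma-monoid} with the addition of the inversion step.
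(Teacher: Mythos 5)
Your argument for $G^A$ follows the paper's proof essentially verbatim: the monoid structure is imported from Lemma \ref{lemma-monoid}, inversion is defined componentwise, its well-definedness is exactly the identity $\pi_J^K(x_K^{-1})=\bigl(\pi_J^K(x_K)\bigr)^{-1}=x_J^{-1}$, and smoothness follows from the product and subset diffeologies. The paper stops there, checking only that $y=(x_J^{-1})_{J\in A}$ lies in $G^A$.

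You go further and try to verify that $G_A$ is stable under the operations; that is the right instinct (it is precisely the point the paper glosses over), but the two claims you make there do not hold as stated. First, $\pi\circ i=Id$ does not force $i$ to preserve the identity: a section of a group morphism need not be a group morphism, nor even send $e_S$ to $e_I$ (take $\pi:\R^2\to\R$ the first projection and $i(x)=(x,1)$). Second, in the case $S\leq I$, membership in $E_{\mathcal{S}}$ gives $x_I=i^S_I(x_S)$; applying the projection $E_I\to E_S$ to $x_I^{-1}$ only yields $x_S^{-1}=\pi(x_I^{-1})$, which is the projective compatibility you already used for $G^A$. It does not yield the clause $x_I^{-1}=i^S_I(x_S^{-1})$ that the definition of $E_{\mathcal{S}}$ actually requires; that clause needs $i^S_I$ to commute with inversion, and only $\pi$ — not $i$ — is assumed to consist of group morphisms. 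So either you must add the hypothesis that $i$ is also a family of group morphisms, or the stability of $E_{\mathcal{S}}$ (hence of $G_A$) under inversion, and likewise under multiplication in the clauses involving $i$, remains unproved. To be fair, the paper's own proof, and the proof of Lemma \ref{lemma-monoid} for $M_A$, silently skip exactly this point, so your attempt is faithful to the paper's route while exposing a gap that the paper does not close either.
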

\begin{proof}
We have to check the existence of the group law and of the inversion mapping on $G^A$ and on $G_A.$ The group law 
is a consequence of Lemma \ref{lemma-monoid}. For inversion, let us consider $x = (x_J)_{J \in A} \in G^A$ (resp. 
$G_A$) and let us consider $y = (x_J^{-1})_{J \in A}.$ Obviously, for the product monoid law, $yx = xy = 1$ and 
the map $x \in G^A \, {{}(\hbox{resp. }G_A)} \mapsto y \in \prod_{J \in A} G_J$ is smooth. Therefore, we only have to prove that 
$y \in G^A,$ which is direct from the statement that the family $\pi$ is a family of morphisms of diffeological 
groups.
\end{proof}
\begin{Cor}
	The same conclusion of Theorem $\ref{thm65}$ holds for diffeological algebras, diffeological 
	(non-commutative) fields, diffeological groups, diffeological vector spaces.
    \end{Cor}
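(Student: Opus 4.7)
The plan is to handle each of the four algebraic structures by bootstrapping from what we already have: the preceding lemma covers diffeological groups outright, and Theorem~\ref{thm65} covers diffeological vector spaces as the special case where the profinite family of rings $(\mathbf{R},\pi,i)$ is taken to be the constant family on a fixed diffeological (not necessarily commutative) field $\mathbb{K}$, with the canonical $\mathbb{K}$-module structure on each $E_J$. So only the algebra and field cases require genuine work.

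For diffeological algebras, my approach is to note that an algebra is simultaneously a module over a ring and a diffeological monoid (or associative monoid) whose multiplication is bilinear over the scalar ring. Given a profinite family $(E,\pi,i)$ where each $E_J$ is a diffeological $\mathbf{R}_J$-algebra and the projections $\pi_J^K$ are algebra morphisms, Theorem~\ref{thm65} gives the $\mathbf{R}^A$-module (resp.\ $\mathbf{R}_A$-module) structure, while Lemma~\ref{lemma-monoid} applied to the underlying multiplicative monoid structure gives the associative multiplication. The bilinearity over the profinite scalar ring is checked componentwise by invoking the fact that $\pi_J^K$ is an algebra morphism, so the compatibility identities pass to the limit in each coordinate, exactly as in the proof of Theorem~\ref{thm65}.

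The field case is the one I expect to require care. A diffeological (non-commutative) field is a unital ring $E_J$ in which the inversion map $x \mapsto x^{-1}$ is defined and smooth on $E_J \setminus \{0_J\}$. The additive group and multiplication on the whole of $E^A$ and $E_A$ are obtained via Lemma~\ref{lemma-monoid} and the module argument of Theorem~\ref{thm65}. The main obstacle is inversion: an element $x = (x_J)_{J \in A} \in E^A$ is nonzero precisely when at least one $x_J \neq 0_J$, but the compatibility $x_J = \pi_J^K(x_K)$ together with the fact that $\pi_J^K$ is a ring morphism forces that if $x_K \neq 0_K$ then $x_J$ may still be $0_J$ (since $\pi_J^K$ need not be injective on nonzero elements). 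To get around this I would work cofinally: the set $A_x = \{J \in A \, | \, x_J \neq 0_J\}$ is upward-closed for $x \neq 0$, and on $A_x$ one defines $y_J = x_J^{-1}$; for $J \notin A_x$ one sets $y_J = 0_J$, consistent with the convention that $\pi$ preserves zero. The coherence $\pi_J^K(y_K) = y_J$ on $A_x$ follows from $\pi_J^K$ being a field morphism (hence injective on units), and the smoothness of inversion transfers coordinatewise from each $E_J \setminus \{0_J\}$. The analogous discussion on $E_A$ uses the same data restricted to sections.

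Finally, combining these three (modules, monoids, inversion on the complement of zero) establishes the corollary uniformly, with the diffeological compatibility of the limit structures inherited from the compatibility at each index via the profinite diffeology $\p^A$ (resp.\ $\p_A$). The key recurring mechanism throughout is that any algebraic law which commutes with the projections $\pi_J^K$ descends to a well-defined operation on the projective limit, and its smoothness in the profinite diffeology is equivalent to componentwise smoothness.
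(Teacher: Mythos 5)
Your proposal is correct and follows essentially the same route as the paper, whose entire proof is the one-line observation that all these structures are specifications of the notions of module over a ring and of group, so that Theorem~\ref{thm65} and the group/monoid lemmas apply directly. Your extra care with inversion in the field case fills in a detail the paper leaves implicit (and is in fact simpler than you suggest: a unital morphism of fields is automatically injective, so every component of a nonzero element of the limit is nonzero and inversion is just componentwise).
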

\begin{proof}
    {All these settings are specifications of the notion of module over a ring and of the notion of group.} 
\end{proof}

\begin{Definition}
Let $G$ and $X$ be two profinite families indexed by $A,$ such that $G$ is a profinite family of diffeological Lie 
groups. An action of $G$ on $X$ is a morphism of diffeological monoids from $G$ to $C^\infty(X,X).$
\end{Definition}

Let us now assume that the spaces $E_J$ {of} the profinite family $(E,\pi)$ are all total spaces of 
$F_J-$fiber bundles $p_J: E_J \rightarrow M_J$ where $(F,\pi)$ and $(M,\pi)$ are also profinite families, with 
smooth compatibility of the fiber structures with the projective structure, {in other words, such that the family 
of projections $\pi$ and the family of injections $i$ are families of morphisms of diffeological fiber bundles.}

\begin{Theorem}
	Under these conditions, $E^A$ (resp. $E_A$) is a smooth fiber bundle over $M^A$ (resp. $M_A$) with typical 
	fiber $F^A$ (resp. $F_A$).
\end{Theorem}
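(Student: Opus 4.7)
My plan is to construct the bundle projection $p^A : E^A \to M^A$ componentwise from the family $(p_J)_{J \in A}$, verify it is a smooth surjection (in fact a subduction, in line with Definition \ref{pseu-fib}), identify its fibers with $F^A$, and finally produce local trivializations by choosing compatible systems on the finite-dimensional levels. The analogous steps will work for $E_A \to M_A$ with typical fiber $F_A$ by restricting to the subset diffeology.

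For the first step, I would define $p^A((e_J)_{J\in A}) := (p_J(e_J))_{J\in A}$ and check that the right-hand side lies in $M^A$: for $J \leq K$, the morphism property of $\pi_J^K$ gives $p_J \circ \pi_J^K = \pi_J^K \circ p_K$, so $p_J(e_J) = p_J(\pi_J^K(e_K)) = \pi_J^K(p_K(e_K))$, which is the required consistency. Smoothness of $p^A$ is immediate from the universal property of the product (and hence of the profinite) diffeology, since $\pi_J \circ p^A = p_J \circ \pi_J$ is smooth for every $J \in A$. To get a subduction, I would show that any plot $q : O \to M^A$ can be lifted locally: for each $J$, the composition $\pi_J \circ q$ lifts locally through $p_J$ by the fiber bundle structure at the finite-dimensional level, and these lifts may be glued by exploiting the compatibility $\pi_J^K \circ (\text{lift}_K) = (\text{lift}_J) \circ \pi_J^K$ coming from the morphism assumption. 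The identification of the fiber over $m = (m_J) \in M^A$ as $F^A$ is then direct: it is the projective limit of $p_J^{-1}(m_J) \cong F_J$ under the restricted bonding maps, which by hypothesis coincide with the maps defining the profinite structure of $F$.

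For the trivializations, given $m \in M^A$ I would select, for each $J$, a trivialization $\psi_J : p_J^{-1}(U_J) \to U_J \times F_J$ over some neighborhood $U_J$ of $m_J$. Setting $U^A := \{m' \in M^A : m'_J \in U_J \text{ for all } J\}$, the maps $(\psi_J)$ assemble into $\psi^A : (p^A)^{-1}(U^A) \to U^A \times F^A$, which is smooth for the profinite diffeology. The main obstacle is ensuring the $\psi_J$ can be chosen so that they are compatible with the bonding maps, i.e. that the diagram relating $\psi_J$ to $\psi_K$ via $\pi_J^K$ on both the base and the fiber commutes; otherwise $U^A$ could fail to be a neighborhood in any useful profinite sense, or $\psi^A$ could fail to be a bijection onto $U^A \times F^A$. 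I would handle this by exploiting the morphism hypothesis: starting from any trivialization at a fixed index, one propagates it downward to smaller indices via $\pi_J^K$ (producing compatible trivializations automatically, since $\pi_J^K$ is a bundle morphism), and upward via the injections $i_K^J$ when needed, shrinking the base neighborhoods as required. The resulting compatible family yields the desired trivialization of $p^A$, and a parallel argument restricted to the subset diffeology yields the statement for $p_A : E_A \to M_A$.
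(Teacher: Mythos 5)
The paper offers no actual argument here (its proof reads, in full, ``Direct.''), so your write-up is an attempt to supply what the authors omitted. Your componentwise definition of $p^A$, the verification that it lands in $M^A$ via the morphism property, and the smoothness via the universal property of the product diffeology are all correct and are surely what the authors have in mind. Note also that, by the paper's own conventions (Definition \ref{pseu-fib} and the subsequent definition of a diffeological vector bundle), a ``fiber bundle with typical fiber'' only requires $p^A$ to be a subduction with all fibers diffeomorphic to $F^A$; local trivializations are explicitly not part of the structure. So the hardest portion of your proof is aimed at a stronger statement than the one being asserted.

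That said, there are genuine gaps even in the parts that are needed. First, your subduction argument lifts $\pi_J\circ q$ through each $p_J$ separately and then claims the lifts ``may be glued by exploiting the compatibility''; but local lifts through a fiber bundle are highly non-unique, $A$ is only directed (not totally ordered), and nothing in your argument shows that independently chosen lifts at incomparable indices $J, J'$ can be corrected to satisfy $\pi_J^K\circ\widetilde{q}_K=\widetilde{q}_J$ simultaneously for all comparable pairs. The same issue infects the fiber identification: each $p_J^{-1}(m_J)$ is diffeomorphic to $F_J$ only after a choice, and to conclude that $\varprojlim p_J^{-1}(m_J)\cong F^A$ you need these choices to intertwine the restricted bonding maps of $E$ with the bonding maps of $F$ --- precisely the coherence you have not established. (A natural fix, available from the profinite structure, is to anchor all choices using the injections $i^J_K$ and a single section through $m$, but this must be said.) Second, your candidate trivializing set $U^A=\{m' : m'_J\in U_J \ \forall J\}$ is an intersection of infinitely many cylinder conditions and need not be a D-open neighborhood of $m$ in $M^A$; when $A$ is infinite it can easily degenerate, so even granting compatible $\psi_J$ the map $\psi^A$ need not trivialize $p^A$ over anything useful. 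If you drop the trivialization step and instead carefully prove coherence of the fiber identifications and of the plot lifts (reducing to a common upper bound in $A$ and pushing down by the $\pi_J^K$), you obtain exactly the statement in the paper's sense.
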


\begin{proof}
	{Direct.}
\end{proof}

\section{Smooth cylindrical functions and regular cotangent space}

{{} The notion of cyindrical map is central in \cite{AM99}. Indeed, in our vocabulary, cylindrical functions provide a natural generating family for the Fr\"olicher structure over the inductive limit or the projective limit of a profinite family. we settle the existing definitions of \cite{AM99} in our diffeological vocabulary, which precises the conceptual meaning of the constructions, and complete these notions for our needs and withing our actualized state of knowledge.}

\begin{Definition}
Let us assume that $(M,\pi,i)$ is a diffeological profinite family. A map $f \in C^\infty(E^A,\R)$ (resp. 
$f \in C^\infty(E_A,\R)$) is called \textbf{cylindrical} if there exists $\mathcal{S}_f \in Sec(A)$ such that
	$$\forall (x_I)_{I \in A}, \quad f((x_I)_{I \in A} ) = f((y_I)_{I\in A}) $$ where $(y_I)_{I\in A}$ is an 
	element of $E^A$ (resp. $E_A$) such that 
	$$\forall I \in A, y_I= \left\{ \begin{array}{lcl}
	x_I & \hbox{ if } & \exists S \in \mathcal{S}_f, I \leq S \\
	i_I^S \circ \pi_S^I (x_I) & \hbox{ for } & S \in \mathcal{S}_f \hbox{ such that } S \leq I \end{array} 
	\right.$$
We note by $Cyl(E^A)$ (resp. $Cyl(E_A)$) the set of cylindrical functions on $E^A$ (resp. on $E_A$)
\end{Definition}

\begin{rem}
	It is clear that the choice of $(y_I)_{I\in A}$ in $E_A$ is not unique.
	Indeed, the definition of the map $$ p_{xy}:(x_I)_{I \in A} \mapsto (y_I)_{I\in A}$$ depends on the chosen 
	section $S_f.$ For example, consider $E = (\R^n)_{n \in \N^*}$ with coordinate-wise projections and a 
	cylindrical function $f$ which extends $f_1 \in C^\infty(\R,\R).$ Then, even in this elementary case, one can 
	define $\mathcal{S}_f $ as either $\R$ or $\R^2...$ or $\R^k$ for any $k \in \N^*$ and for all these sections, 
	\begin{itemize}
		\item $f$ will be cylindrical
		\item the map $p_{xy}$ will depend on the chosen section $\mathcal{S}_f.$
	\end{itemize}
\end{rem}

{\begin{rem}
    We also remark that the mappings $p_{xy},$ that depend on a section {{} $\mathcal{S}_f,$} are 
    defined by the family $(x_J)_{J \in \overline{B}}$ where 
    $${{} 
    \overline{B} = \{ J \in A \, | \, \exists K \in \mathcal{S}_f, K < J \} 
    }$$ 
    is such that $\overline{B} \in \mathfrak{F}$, {the filter of sections of $A$}. 
\end{rem}}

These comments mean that we have the following, {{}which has to be compared with the essence of \cite{AM99}: }
\begin{Theorem}
	\begin{enumerate}
		\item $Cyl(E^A) = Cyl(E_A)$
		\item 
        $Cyl(E^A)$ separate $E^A.$
        \item {{}$\p_A$, resp. $\p^A$ is a reflexive diffeology. The corresponding Fr\"olicher structure is generated by $Cyl(E_A).$ }
	\end{enumerate}
\end{Theorem}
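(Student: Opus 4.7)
The plan is to treat all three parts through a single tool—a ``cylindrical retraction'' associated to each section. Given $\mathcal{S} \in Sec(A)$ and $x \in E^A$, I would read the formula in the definition of a cylindrical function as the definition of a map $p_{\mathcal{S}} : E^A \to E_{\mathcal{S}} \subset E_A$ by
$p_{\mathcal{S}}(x)_I = x_I$ if $I \leq S$ for some $S \in \mathcal{S}$, and $p_{\mathcal{S}}(x)_I = i_I^S \circ \pi_S^I(x_I)$ if $S \leq I$ for some $S \in \mathcal{S}$. The first step is to verify that this prescription is consistent (different admissible $S$ give the same output) and that $p_{\mathcal{S}}(x) \in E^A$, using the profinite axioms $\pi_J^K \circ i_K^J = Id_{E_J}$, $i_K^J \circ i_J^I = i_K^I$, and $\pi_J^L = \pi_J^K \circ \pi_K^L$; this also makes $p_{\mathcal{S}}$ smooth and yields $p_{\mathcal{S}}^2 = p_{\mathcal{S}}$. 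A map $f$ on $E^A$ (resp.\ on $E_A$) is then cylindrical with section $\mathcal{S}$ iff $f = f \circ p_{\mathcal{S}}$.

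Part (1) follows immediately: restriction $Cyl(E^A) \to Cyl(E_A)$, $f \mapsto f|_{E_A}$, and extension $g \mapsto g \circ p_{\mathcal{S}_g}$ are mutually inverse, so the sets coincide under this identification. For Part (2), given $x \neq y \in E^A$ pick $J \in A$ with $x_J \neq y_J$. I would first build a section of $A$ containing $J$: let $N_J = \{K \in A \mid K \text{ incomparable to } J\}$, apply Zorn's lemma to produce a maximal antichain $\mathcal{S}_0 \subset N_J$; maximality forces every element of $N_J$ to be comparable to an element of $\mathcal{S}_0$, so $\mathcal{S} = \{J\} \cup \mathcal{S}_0$ is a section. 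Since $E_J$ is a finite-dimensional manifold, there exists a smooth $g : E_J \to \R$ with $g(x_J) \neq g(y_J)$. The function $f = g \circ \pi_J$ is smooth on $E^A$, cylindrical with section $\mathcal{S}$ (because $J \in \mathcal{S}$ forces $p_{\mathcal{S}}(z)_J = z_J$), and separates $x$ from $y$.

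For Part (3), I would identify the contours of the Fr\"olicher structure generated by $Cyl(E^A)$ with the one-dimensional plots of $\p^A$. A curve $c : \R \to E^A$ is such a contour iff $f \circ c \in C^{\infty}(\R,\R)$ for every cylindrical $f$; specializing to $f = g \circ \pi_J$ for all $J \in A$ and all smooth $g : E_J \to \R$, and using that maps into a finite-dimensional manifold are smooth iff their compositions with smooth real functions are, this is equivalent to $\pi_J \circ c \in C^{\infty}(\R, E_J)$ for every $J$, i.e.\ to $c$ being a plot of $\p^A$ of domain $\R$. Combined with Theorem \ref{compl-fro} and the Boman-type characterization recalled after Proposition \ref{Frodiff}, this identifies the Fr\"olicher structure generated by $Cyl(E^A)$ with $(X, \F_{\p^A}, \mcc_{\p^A})$, and further shows that $\p^A = \p_\infty(\F_{\p^A})$, which is reflexivity. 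The argument for $\p_A$ follows verbatim using the subset diffeology in $E^A$ together with Part (1), which guarantees that the generating family $Cyl(E_A)$ may be used on either side.

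The main obstacle will be the first step: proving that $p_{\mathcal{S}}$ is well-defined when $\mathcal{S}$ is an infinite section and an index $I$ sits above several elements of $\mathcal{S}$ (in which case one must check that all admissible choices of $S$ produce the same $p_{\mathcal{S}}(x)_I$), or below one $S$ and above another (which, by the antichain property of $\mathcal{S}$, does not actually occur but must be ruled out cleanly). All of this reduces to bookkeeping with the three profinite compatibility axioms, but this also pins down the precise inverse used in the identification (1) and is what makes the subsequent arguments clean.
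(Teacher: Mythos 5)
Your overall strategy---packaging the definition of a cylindrical function into an idempotent retraction $p_{\mathcal{S}}$ and running all three parts through it---is essentially the paper's own approach (the paper's map $p_{xy}$ plays exactly the role of your $p_{\mathcal{S}}$), and in Parts (2) and (3) you are in fact more careful than the paper: the paper merely asserts that the projections separate $E^A$ without checking that $g\circ\pi_J$ is cylindrical (your Zorn construction of a section containing $J$ fills this in), and its reflexivity argument likewise rests on the observation that $C^\infty(E_I,\R)\circ\pi_I\subset Cyl(E_A)$ generates the diffeology, which is what your contour-plus-Boman argument establishes.

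The genuine problem is the step you yourself flag and then wave away: the claim that well-definedness of $p_{\mathcal{S}}(x)_I$, when $I$ dominates several elements of $\mathcal{S}$, ``reduces to bookkeeping with the three profinite compatibility axioms.'' It does not. The axioms $\pi_J^K\circ i_K^J=Id_{E_J}$, $i_K^J\circ i_J^I=i_K^I$ and $\pi_J^L=\pi_J^K\circ\pi_K^L$ never relate $i_I^S\circ\pi_S^I$ to $i_I^{S'}\circ\pi_{S'}^I$ for two incomparable $S,S'\leq I$. Concretely, take $A=\{S,S',I\}$ with $S,S'$ incomparable and $S,S'\leq I$, $E_S=E_{S'}=\R$, $E_I=\R^2$, with the two coordinate projections and coordinate inclusions; then $\mathcal{S}=\{S,S'\}$ is a section, and $i_I^S\circ\pi_S^I(a,b)=(a,0)\neq(0,b)=i_I^{S'}\circ\pi_{S'}^I(a,b)$. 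So $p_{\mathcal{S}}$ is not well-defined from the axioms alone, and the extension map $g\mapsto g\circ p_{\mathcal{S}_g}$ on which your Part (1) rests is not defined in this generality; one needs either an extra compatibility hypothesis on the injections or a restriction to indices lying above at most one section element. (This defect is inherited from the paper, whose definition of a cylindrical function and of $p_{xy}$ carries the same ambiguity and whose proof of (1) simply calls the correspondence ``obviously univoque''; but since you explicitly promise to resolve it by bookkeeping, the promise cannot be kept as stated.) Parts (2) and (3) survive, since the functions $g\circ\pi_J$ used there only require $p_{\mathcal{S}}(z)_J=z_J$ for $J\in\mathcal{S}$, which is unambiguous.
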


\begin{proof}
	\begin{enumerate}
	    \item We have $E_A \subset E^A$ therefore $Cyl(E^A) \subset Cyl(E_A).$ Let $f \in Cyl(E^A)$ with section 
	    $\mathcal{S}_f \in Sec(A)$ and let $(x_J)_{J \in \overline{B}}.$ Then 
	    $p_{xy}((x_J)_{J \in \overline{B}}) \in E^A.$ Therefore, $f$ defines a map on 
	    $E_{\overline{B}} \subset E_A$ by $$f((x_J)_{J \in \overline{B}}) = 
	    f \circ p_{xy}((x_J)_{J \in \overline{B}}).$$ This correspondence is obviously univoque.
        \item Projection maps $E^A \rightarrow E_J$ separate $E^A.$
        \item {{}Let $I \in A.$ Then $C^\infty(E_I,\R)$ is a set of generating functions for the nebulae diffeology of $E_I$, and it is also a subset of $Cyl(E_A).$ Therefore, $$p \in \p(E_I) \Leftrightarrow \forall f \in C^\infty(E_I,,\R), f \circ p \in C^\infty(\R,\R)  \Leftrightarrow  \forall f \in Cyl(E_A), f \circ p \in C^\infty(\R,\R).$$}
	\end{enumerate}
	\end{proof}

{{} Let us now turn to new results.}
\begin{Theorem}
	
	The set of polynomials of cylindrical maps on $E_A,$ {{} definied by 
	$$\bigcup_{n \in \N^*}\left\{ P \circ (f_1,...,f_n) \, | \, P \in \R[X_1,...X_n] \hbox{ and } (f_1,...f_n) \in Cyl(E_A)^n\right\},$$}
	 is dense in $C^0(E^A,\R)$ if $E^A$ is compact and it is  
	equipped with the distance $d^\infty_E.$ 
\end{Theorem}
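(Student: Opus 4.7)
My proof plan is to apply the classical real Stone--Weierstrass theorem to the algebra $\mathcal{A}$ of polynomials of cylindrical maps on $E_A$. By hypothesis $E^A$ is compact with respect to $d^\infty_E$, and by the earlier theorem on pseudo-distances, $d^\infty_E$ is in fact a distance, so $(E^A, d^\infty_E)$ is compact Hausdorff. It therefore suffices to show that $\mathcal{A}$ is a unital, point-separating subalgebra of $C^0(E^A, \R)$.

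The algebraic and separation conditions are easy to verify. Given $P \circ (f_1, \ldots, f_m)$ and $Q \circ (g_1, \ldots, g_n)$ in $\mathcal{A}$, their sum and product again lie in $\mathcal{A}$ by merging the tuples of cylindrical factors and extending $P, Q$ to polynomials in $m+n$ variables; constants are obtained from degree-zero polynomials. Point separation follows directly from part (2) of the previous theorem, which states that $Cyl(E^A)$ already separates $E^A$. For continuity of a generator $f \in Cyl(E^A)$, I would use its defining factorization through the section $\mathcal{S}_f$: writing $f$ in terms of the projections $\pi_S : E^A \to E_S$ for $S \in \mathcal{S}_f$, and noting that $\phi(t) = t/(1+t)$ is monotone and that $d^\infty_E = \sup_J \phi \circ d_J$, one obtains the contraction estimate $\phi(d_S(x_S, y_S)) \leq d^\infty_E(x,y)$, so each projection $\pi_S : (E^A, d^\infty_E) \to (E_S, d_S)$ is continuous.

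The main obstacle is bridging the gap between continuity with respect to $d_S$ and the manifold-level smoothness of the finite-dimensional representative $\tilde f$. The paper explicitly stresses that the metric $d_J$ need not induce the manifold topology (cf.\ the ultrametric example following the pseudo-distance theorem), so a priori $\tilde f$ need not be $d_S$-continuous. The cleanest remedy is to impose the standard compatibility hypothesis that each $d_J$ induces the manifold topology on $E_J$, which holds in all the motivating examples of the paper; alternatively one can exploit the compactness of $E^A$ to argue that the projections land in compact subsets of each $E_S$, on which smoothness yields Lipschitz behavior with respect to $d_S$. Once this technical point is secured, $\mathcal{A}$ becomes a unital, point-separating subalgebra of $C^0(E^A, \R)$ and Stone--Weierstrass yields the density claim.
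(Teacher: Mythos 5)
Your proposal takes the same route as the paper: the paper's entire proof is a one-line citation of Bourbaki's \emph{General Topology}, Chapter X, Proposition 5 --- the Stone--Weierstrass theorem --- so your verification that the polynomials of cylindrical maps form a unital, point-separating subalgebra of $C^0(E^A,\R)$ is exactly what that citation leaves implicit. The continuity issue you flag (a cylindrical function factors through a map that is smooth for the \emph{manifold} topology of $E_S$, which the paper itself warns need not coincide with the $d_S$-topology) is a genuine gap that the paper's proof does not address, and your proposed remedies --- assuming each $d_J$ induces the manifold topology, or exploiting compactness of the images $\pi_S(E^A)$ --- are reasonable ways to close it.
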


\begin{proof}
	This is a consequence of \cite[X, Proposition 5]{Bou-Top}.
\end{proof}

Now we can turn to cotangent {{} fiber bundles}. We remark that, for fixed index 
$J \in A,$ $T^*E_J$ is the (classical) contangent space of the manifold $E_J.$ Moreover, if $J \leq K,
$ the projection map $\pi_J^K$ defines a smooth map from  $T^*E_J$ to $T^*E_K$ by 
$$ \alpha \in T^*E_J \mapsto \alpha \circ d \pi_J^K$$ that we also note by $\pi_J^K,$ and the injection map 
$i_K^J$ defines a smooth map from  $T^*E_K$ to $T^*E_J$ by 
$$ \alpha \in T^*E_K \mapsto \alpha \circ d i_K^J$$ that we also note by $i_K^J.$   

\begin{Proposition}
	$(T^*E,i)$ is a profinite family with injection maps $\pi.$
	\end{Proposition}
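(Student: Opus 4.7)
The plan is a direct verification of the profinite family axioms, reducing everything to the contravariant functoriality $(f\circ g)^* = g^* \circ f^*$ of cotangent pullback. Observe that the roles of projection and injection at the cotangent level are swapped with respect to $(E,\pi,i)$: for $J \leq K$, the map $i_K^J \colon T^*E_K \to T^*E_J$ plays the role of projection and $\pi_J^K \colon T^*E_J \to T^*E_K$ plays the role of injection.

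First, I would note that each $T^*E_J$ is a finite-dimensional manifold because $E_J$ is, and both cotangent-level maps introduced just before the proposition are smooth by the chain rule. Second, I would verify the three structural axioms by transporting those of $(E,\pi,i)$ through pullback. The splitting relation $\pi_J^K \circ i_K^J = Id_{E_J}$ yields $i_K^J \circ \pi_J^K = Id_{T^*E_J}$ at the cotangent level, so the new projection $i_K^J$ is surjective with a smooth section, and the new injection $\pi_J^K$ is injective. The projection consistency $\pi_J^L = \pi_J^K \circ \pi_K^L$ becomes $\pi_J^L = \pi_K^L \circ \pi_J^K$ as maps $T^*E_J \to T^*E_L$, which is the required consistency of the new injections. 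Dually, the injection consistency $i_L^J = i_L^K \circ i_K^J$ becomes $i_L^J = i_K^J \circ i_L^K$ as maps $T^*E_L \to T^*E_J$, which is the required consistency of the new projections.

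The main conceptual point to confirm is that the map $i_K^J$ at the cotangent level is well-defined on all of $T^*E_K$: for $(y,\beta) \in T^*E_K$, the expression $\beta \circ di_K^J$ only makes direct sense when $y \in i_K^J(E_J)$. I would handle this by using the canonical section-retraction pair provided by the profinite structure, restricting $\beta$ to the image $di_K^J|_x(T_xE_J) \subset T_{i_K^J(x)}E_K$ at the base point $x = \pi_J^K(y) \in E_J$ and identifying the result with an element of $T_x^*E_J$ via duality. With this convention smoothness is preserved, and the functorial identities above yield all axioms of a profinite family for $(T^*E, i, \pi)$.
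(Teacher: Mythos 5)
Your functorial skeleton is correct and is exactly the argument the paper itself uses for the analogous statement that $(\Omega^*(E_I),(i_K^I)^*,(\pi_I^K)^*)$ is a profinite family (the present Proposition is stated in the paper without proof): the roles of $\pi$ and $i$ swap under $(f\circ g)^*=g^*\circ f^*$, the splitting $\pi_J^K\circ i_K^J=Id_{E_J}$ dualizes to $i_K^J\circ\pi_J^K=Id_{T^*E_J}$, and the two consistency identities dualize as you write them. All of that part is fine.

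The genuine gap is in your repair of the well-definedness problem --- a problem you were right to isolate, since the formula $\beta\mapsto\beta\circ di_K^J$ only makes sense for $\beta\in T^*_yE_K$ with $y\in i_K^J(E_J)$. Your fix sets $x=\pi_J^K(y)$ and proposes to restrict $\beta$ to the subspace $di_K^J|_x(T_xE_J)\subset T_{i_K^J(x)}E_K$; but $\beta$ is a linear form on $T_yE_K$, and $T_{i_K^J(x)}E_K$ is a \emph{different} vector space unless $y=i_K^J(x)$, i.e.\ unless $y$ was already in the image of $i_K^J$ --- exactly the case that needed no repair. Without additional structure (a connection, a trivialization $E_K\cong E_J\times F$ compatible with $(\pi,i)$, or linearity of the $E_J$) there is no canonical transport of $\beta$ from $y$ to $i_K^J(\pi_J^K(y))$, so the candidate projection $T^*E_K\to T^*E_J$ is not globally defined and its required surjectivity cannot even be formulated. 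A correct repair must either (i) restrict the fibers to the subbundle $T^*E_K|_{i_K^J(E_J)}$, or equivalently work pointwise over points of $E_A$ beyond a section, where $x_K=i_K^J(x_J)$ holds and the formula is meaningful (this is what the colimit description of $T^*_\gamma W$ in the Wiener example actually does); or (ii) invoke the fibered setting of Section \ref{s:fibr}, where the bundle structure of $\pi_J^K$ together with its section yields a splitting of $T^*_yE_K$ whose first factor defines the projection globally. As it stands, your argument establishes the injections $(\pi_J^K)^*$ and the identities among them, but not that the maps $(i_K^J)^*$ are everywhere-defined surjections, which is what the profinite-family axioms demand.
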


\section{Tangent space}
We stated in Section 1 that there is more than one way to define the tangent space of a diffeological space. {{} in this section we show that in the profinite case these previous definitions coincide if the index set $A$ satisfies a technical condition.} In the
profinite case we have:
 
\begin{Proposition}
	If $(E,\pi)$ is a profinite family, then the family of tangent spaces of finite dimensional manifolds 
	$(TE,T\pi)$ is a diffeological projective family. 
\end{Proposition}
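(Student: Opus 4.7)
The plan is to verify the three defining conditions of a diffeological projective family for the pair $(TE, T\pi)$, where $TE = \{TE_J\}_{J \in A}$ collects the classical tangent bundles and $T\pi = \{T\pi_J^K\}_{J \leq K}$ collects the classical tangent (differential) maps.

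First, I would note that since each $E_J$ is a finite-dimensional manifold by the definition of a profinite family, the tangent bundle $TE_J$ is likewise a finite-dimensional manifold, which I equip with its nebulae diffeology. Because smoothness between finite-dimensional manifolds in the diffeological sense coincides with classical smoothness (via Proposition \ref{Frodiff} applied to the manifold Fr\"olicher structure), each $\pi_J^K$ is classically smooth, and hence its differential $T\pi_J^K : TE_K \to TE_J$ is a smooth map between finite-dimensional manifolds, therefore smooth in the diffeological sense.

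Second, the consistency conditions follow routinely from functoriality of the tangent functor: the chain rule yields $T\pi_J^L = T(\pi_J^K \circ \pi_K^L) = T\pi_J^K \circ T\pi_K^L$ for $J \leq K \leq L$, and $T\pi_K^K = T(Id_{E_K}) = Id_{TE_K}$.

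The main step, and the one I expect to be the genuine obstacle, is verifying the surjectivity of each $T\pi_J^K$, since this is built into the definition of a projective family. Surjectivity of $\pi_J^K$ alone is not enough to guarantee surjectivity of its differential. Here the profinite structure is essential: from the defining splitting relation $\pi_J^K \circ i_K^J = Id_{E_J}$, applying the tangent functor gives $T\pi_J^K \circ T i_K^J = Id_{TE_J}$, so $T\pi_J^K$ admits $T i_K^J$ as a smooth right inverse and is therefore surjective. This is the step that would fail for a general diffeological projective family lacking the splitting injections; the remainder is a direct application of classical differential calculus combined with the compatibility between diffeological and classical smoothness on finite-dimensional manifolds.
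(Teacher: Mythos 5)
Your proposal is correct and follows essentially the same route as the paper: the paper's proof likewise observes that each $TE_K$ is a finite-dimensional manifold, displays the split identity $d\pi^K_J \circ di^J_K = Id$ obtained by differentiating $\pi_J^K \circ i_K^J = Id_{E_J}$ (which is exactly your surjectivity argument), and notes that all the maps $d\pi^K_J$ and $di^J_K$ are smooth for the underlying nebulae diffeologies. Your write-up simply makes explicit the functoriality/chain-rule step and the reason surjectivity is the only nontrivial point, which the paper leaves implicit.
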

\begin{proof}
	Let $K \in A.$ Then $TE_K$ is a finite-dimensional manifold and $\forall (J,K) \in A^2,$ if $J \leq K, $
	$$ d \pi^K_J \circ di^J_K = Id_{TE_K}.$$
	All maps $d \pi^K_J$ and $di^J_K$  are smooth for the underlying nebulae diffeologies in the tangent spaces 
	$TE_J$ and $TE_K.$  
\end{proof}

\begin{Definition}
    We say that $A$ is \textbf{finitely cylindrical} if the filter {
    	 of sections} $\mathfrak{F}$ is 
    generated by the base $$\mathfrak{B}_{finite} = 
    \{ \overline{B} \in \mathfrak{B} \, | \, \overline{B} \hbox{ is a finite subset of } A \},$$
    {in which $\mathfrak{B}$ is the basis of $\mathfrak{F}$ defined in Section 3.}
\end{Definition}

\begin{Theorem} \label{th:tangent}
Let us now assume that $A$ is finitely cylindrical.  Then, the  internal tangent bundle and the external tangent 
space coincide with the limit set and the product set of the family $(TE,D\pi),$ that we therefore note $TE_A$ and 
$TE^A$ without any ambiguity.
\end{Theorem}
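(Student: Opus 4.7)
The plan is to build explicit bijections between each tangent-space candidate reviewed in Section~1---internal tangent cone, internal tangent space in the sense of \cite{CW2014}, and external tangent space---and the fiber $(TE^A)_x$ of the just-constructed projective limit, with an entirely analogous argument for $(TE_A)_x$. Since each candidate reduces to the classical tangent space on a finite-dimensional manifold, and since by the preceding proposition $(TE^A)_x$ is the set of coherent families $(v_J)_{J\in A}$ with $v_J = d\pi_J^K(v_K)$ whenever $J\le K$, the theorem reduces to showing that every candidate tangent object is canonically parametrized by such coherent families.

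For the forward direction, a smooth germ $c:\R\to E^A$ with $c(0)=x$ yields smooth paths $c_J=\pi_J\circ c$ in each finite-dimensional $E_J$, and the family $(c_J'(0))_{J\in A}$ is coherent by the chain rule; this defines the map from the internal tangent cone to $(TE^A)_x$, injective by the very definition of $E^A$ as a projective limit of sets. For the external side, a derivation $D$ at $x$ restricts via $\pi_J^*$ to a classical derivation of $C^\infty(E_J,\R)$ at $x_J$, hence to a classical tangent vector, and compatibility under $\pi_J^K$ follows from $\pi_J = \pi_J^K\circ\pi_K$. Since cylindrical maps generate the Fr\"olicher structure of $E^A$, $D$ is determined by its restrictions, so external $\to (TE^A)_x$ is injective.

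For the reverse direction, given $(v_J)_{J\in A}\in (TE^A)_x$, the finitely cylindrical hypothesis lets us choose a finite section $\overline{B}\in\mathfrak{B}_{finite}$ on which the coherent family is already fully determined. On the finite-dimensional manifold $E_S$ for each $S\in\overline{B}$, the tangent vector $v_S$ is realized by a classical smooth germ $c_S$, and we lift to a path into $E^A$ by setting $c_I = i^S_I\circ c_S$ for $I\ge S$ and $c_I=\pi^S_I\circ c_S$ for $I\le S$; coherence of these definitions across $\overline{B}$ follows from the section axioms of Definition~\ref{sec} together with the consistency relations on $i$ and $\pi$. The lift is smooth for $\p^A$ precisely because $\mathfrak{F}$ is generated by finite sections: smoothness at every index $I$ reduces to smoothness of $c_S$ for a single finite-dimensional $S$. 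Its germ projects back to $(v_J)_{J\in A}$, so the internal tangent cone surjects onto $(TE^A)_x$; in particular the cone is already a vector space, so it agrees with the internal tangent space of \cite{CW2014}. The same finite-section reduction turns a coherent family into a derivation on $Cyl(E^A)$ via $(v_J)(f)=v_S(f_S)$ whenever $f$ factors through $E_S$, and density of polynomials of cylindrical maps extends this to a genuine derivation on all smooth functions.

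The principal obstacle is precisely this lifting step: one must patch finite-dimensional germs of paths along a branch of the filter of sections into a single path smooth for the profinite diffeology $\p^A$ at every index of $A$. The finitely cylindrical assumption is the technical input that makes the patching possible, by ensuring that at any index only finitely many components of the coherent family are ``active''; without it one would have to control an infinite-product germ simultaneously, which is exactly the type of pathology that separates the several non-equivalent tangent-space definitions in the general diffeological setting.
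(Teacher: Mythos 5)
Your overall strategy is the same as the paper's: use the finitely cylindrical hypothesis to reduce every candidate tangent object to the classical tangent space of a finite-dimensional manifold attached to a finite section, where all the definitions coincide, and then assemble along the system $(TE,D\pi)$. The paper's own proof is a three-line sketch of exactly this reduction (a cylindrical $f$ is a smooth map on $\prod_{J\le S,\, S\in\mathcal{S}_f}E_J$, a derivation restricts there to a genuine vector field, and the identification ``extends''), so your write-up is an expansion of the same route rather than a different one.

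Two of your added steps do not quite work as stated. First, in the lifting step you realize a coherent family $(v_J)$ by the path $c_I=i^S_I\circ c_S$ for $I\ge S$; this path passes through the point $\bigl(i^S_I(x_S)\bigr)_{I\in A}$, which equals the given base point $x$ only when $x$ already lies in $E_{\mathcal{S}}$ for the chosen section, i.e.\ only for cylindrical points. So your construction gives surjectivity of the internal tangent cone onto the fibers of $TE_A$ over points of $E_A$ (after choosing, via directedness, a common section adapted to both $x$ and the family), but not onto $(TE^A)_x$ at an arbitrary $x\in E^A$ as written; this matches the theorem's pairing of the internal tangent space with the inductive limit $TE_A$, but you should restrict the claim accordingly. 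Second, you extend a derivation from $Cyl(E^A)$ to all of $C^\infty(E^A,\R)$ by invoking density of polynomials of cylindrical maps; that density statement requires $E^A$ compact, and in any case a derivation is not continuous for the $C^0$ topology, so $C^0$-density cannot propagate either the Leibniz rule or the values of the derivation. The ingredient actually available is that $Cyl(E_A)$ generates the Fr\"olicher structure of the reflexive diffeology $\p^A$, so that smooth functions and derivations are detected by cylindrical data; that is what justifies recovering a derivation from its finite-dimensional restrictions, and it is also the unstated content of the paper's final ``which extends to $E_A$ and to $E^A$.''
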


\begin{proof}
	Let $f \in C^\infty(E_A,\R)$ and let $d$ be a derivation in $C^\infty(E^A,\R).$ Let us consider a cylindrical 
	map $f.$ Then $f$ is identified with a smooth map on the smooth manifold 
	$\prod_{J \leq S, S \in \mathcal{S}_f} E_J.$
	Therefore, there exists a unique vector field $X_{d,J}$ on  $\prod_{J \leq S, S \in \mathcal{S}_f} E_J$ such 
	that $$ d f =  X_{d,J}(f).$$
	Therefore, we have an identification 
between the internal tangent cone and the external tangent space of $\prod_{J \leq S, S \in \mathcal{S}_f} E_J$ 
which extends to $E_A$ and to $E^A.$ 	 
\end{proof}

\section{cylindrical Riemannian metrics, differential forms, presymplectic and symplectic structures}
\subsection{Riemannian and Hermitian metrics}

We define Riemannian metrics by means of the following proposition.

\begin{Proposition}
   A \textbf{Riemannian metric} on a profinite vector bundle {{} in the sense of section \ref{s:fibr}} is defined equivalently by 
   \begin{itemize}
       \item a smooth map $$g_A: {(E_A)^{(2)}}  \rightarrow \R$$ which is fiberwise a symmetric positive definite  
       bilinear map;
       \item a family of smooth maps $$g_I: {E_I^{(2)}}  \rightarrow \R\; , \quad I \in A$$ which are (fiberwise) 
       symmetric positive definite  bilinear maps, and that satisfy the compatibility condition by pull-back: 
       $\forall (J,K) \in A^2,$ if $J < K,$ $g_I = (i^I_K)^* (g_K)\; ;$
       \item a smooth map $$q_A: E_A \rightarrow \R$$ which is fiberwise a quadratic positive definite  map; 
       \item a family of smooth maps $$q_I: E_I  \rightarrow \R\; , \quad I \in A$$ which are fiberwise quadratic 
       positive definite, and that satisfy the compatibility condition by pull-back: $\forall (J,K) \in A^2,$ 
       if $J < K,$ $q_I = (i^I_K)^* (q_K)$.
   \end{itemize}
\end{Proposition}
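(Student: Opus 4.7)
The plan is to organize this four-way equivalence into three essentially independent claims: the polarization equivalences (i)$\Leftrightarrow$(iii) and (ii)$\Leftrightarrow$(iv) between bilinear and quadratic descriptions, together with the projective-assembly equivalence (i)$\Leftrightarrow$(ii) between the global object and the compatible family (from which (iii)$\Leftrightarrow$(iv) then follows by repeating the same argument applied to quadratic forms). I would dispatch the polarization step first, then the easy forward direction of the assembly, and finally tackle the converse, which is where the real work lies.

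For polarization, given $g_A$ I would set $q_A(v) := g_A(v,v)$; this is fiberwise positive definite and quadratic, and smooth because the fiberwise diagonal $E_A \to (E_A)^{(2)}$ is smooth. Conversely, the identity $g_A(u,v) = \tfrac{1}{4}\bigl(q_A(u+v) - q_A(u-v)\bigr)$ recovers $g_A$ from $q_A$, and smoothness is preserved because the fiberwise sum and difference maps of the profinite vector bundle are smooth (by Theorem \ref{thm65} applied level-wise). The same argument carried out at fixed index $I$ yields (ii)$\Leftrightarrow$(iv).

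The forward direction (i)$\Rightarrow$(ii) of the projective assembly is the straightforward pull-back: defining $g_I := (i^I_\infty)^*(g_A)$, where $i^I_\infty : E_I \hookrightarrow E_A$ is the canonical inclusion coming from the inductive-limit description of $E_A$, gives a smooth fiberwise bilinear form at each index. Fiberwise positive definiteness is preserved because $i^I_\infty$ is injective and fiberwise a morphism of vector bundles, and the compatibility $g_J = (i^J_K)^*(g_K)$ for $J \leq K$ follows from the cocycle identity $i^J_\infty = i^K_\infty \circ i^J_K$ guaranteed by condition (3) in the definition of a profinite family.

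The converse, assembling a compatible family $(g_I)_{I \in A}$ into a global $g_A$, is where I expect the main obstacle. For $(u,v) \in (E_A)^{(2)}$ lying over a point $x \in M_A$, one selects any index $I$ for which $(u,v)$ is in the image of $i^I_\infty$ and sets $g_A(u,v) := g_I(u_I,v_I)$; well-definedness follows from the pull-back compatibility of the family together with the cocycle identity for the injections, and fiberwise bilinearity and positive definiteness then descend at once. The genuine difficulty is verifying smoothness for the profinite diffeology: a plot of $(E_A)^{(2)}$ is given by a compatible family of smooth plots $p_J$ into each $E_J^{(2)}$, and one must argue that locally on the domain of such a plot a dominating index can be chosen for which $g_A \circ p$ coincides with the classical smooth composition $g_J \circ p_J$. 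Patching these local cylindrical expressions and invoking the universal property characterizing $\p^A$ as the coarsest profinite diffeology then yields the required smoothness, after which the analogous construction $q_I \mapsto q_A$ closes the cycle.
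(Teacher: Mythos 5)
Your overall architecture---reduce everything by polarization to the compatible family of fiberwise quadratic forms, then reassemble the global object from the family---is exactly the strategy of the paper, whose published proof is a two-sentence sketch asserting that all four items generate the compatible family $(q_I)_{I\in A}$ and that one rebuilds the full picture ``by classical arguments''. Your polarization step (using the smoothness of the fiberwise sum and scalar multiplication) and the forward direction, pulling $g_A$ back along the canonical inclusions and using the cocycle identity for the injections, are sound and considerably more explicit than the original.

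The gap is in the reassembly, precisely at the point you yourself flag as the crux. You assert that a plot $p$ of $(E_A)^{(2)}$ admits, locally on its domain, a dominating index $J$ such that $g_A\circ p$ coincides with $g_J\circ p_J$. This is false for the diffeology $\p_A$ the paper actually uses, namely the subset diffeology inherited from $\prod_{J\in A}E_J$. Take $A=\N$, $E_n=\R^n$ over a one-point base with the canonical inclusions and $g_n$ the Euclidean inner product (a compatible family). Let $p:\R\to\R^\infty$ have $n$-th component a bump function $\phi_n$ of height $1$ supported in a small interval around $1/n$, with pairwise disjoint supports. Every component of $p$ is smooth and every $p(t)$ has finite support, so $p$ is a plot of $\p_A$; yet $p$ factors through no $\R^N$ on any neighbourhood of $0$, and $g_A(p(t),p(t))=\sum_n\phi_n(t)^2$ equals $1$ at every $t=1/n$ and $0$ at $t=0$, hence is not even continuous. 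So the reconstructed $g_A$ need not be smooth, and invoking the maximality of $\p^A$ cannot repair this: smoothness of $g_A$ must be verified against \emph{every} plot of the subset diffeology, not only the cylindrical ones. Closing the argument requires an additional hypothesis guaranteeing local factorization of plots through a single index (compare the ``finitely cylindrical'' condition the paper later imposes for tangent spaces), or a weakening of the first and third items to smoothness on each stratum $E_{\mathcal{S}}$. The paper's own proof passes over this point in silence, so you have isolated a genuine difficulty rather than merely failed to reproduce a routine verification.
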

\begin{proof}
The straightforward correspondence between the four items arises from the fact that they all generate a family of 
smooth maps $$q_I: E_I \times E_I \rightarrow \R$$ which are (fiberwise) quadratic positive definite maps, and 
hat 
satisfy the compatibility condition by pull-back: $\forall (J,K) \in A^2,$ if $J < K,$ $q_I = (i^I_K)^* (q_K).$ 
Reciprocally, within this family, one can build directly the full picture by classical arguments. 
\end{proof}

\begin{Proposition}
   A \textbf{Hermitian metric} on a profinite vector bundle is the same as before but for Hermitian metrics.
   \end{Proposition}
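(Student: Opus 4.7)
The plan is to mirror the proof of the Riemannian case almost verbatim, replacing symmetric $\R$-bilinear forms with sesquilinear Hermitian forms and invoking the complex polarization identity in place of the real one. First I would state the four equivalent conditions explicitly: (i) a smooth map $h_A \colon (E_A)^{(2)} \to \mathbb{C}$ that is fiberwise Hermitian and positive definite; (ii) a compatible family $h_I \colon E_I^{(2)} \to \mathbb{C}$ satisfying the pull-back condition $h_J = (i_K^J)^*(h_K)$ whenever $J \leq K$; (iii) a smooth map $q_A \colon E_A \to \R$ which is fiberwise a positive definite Hermitian quadratic form; (iv) a compatible family $q_I \colon E_I \to \R$ satisfying the analogous pull-back condition.

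The passage between (i)--(ii) and (iii)--(iv) is achieved via the complex polarization identity
\[
h(u,v) = \tfrac{1}{4}\bigl(q(u+v) - q(u-v) + i\, q(u+iv) - i\, q(u-iv)\bigr),
\]
which reconstructs a Hermitian form from its associated quadratic map. Since this formula consists of smooth algebraic operations, smoothness and positive definiteness transfer in both directions. The equivalence between the global and indexed formulations then follows from the universal property of the projective limit together with the compatibility of pull-backs: a compatible family $(h_I)_{I \in A}$ assembles to a well-defined $h_A$ on $(E_A)^{(2)}$ by the same argument as in the Riemannian proposition, and conversely $h_I = (i_\infty^I)^*(h_A)$ recovers the family.

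The main technical point to check — which I expect to be the only nontrivial step — is that the complex scalar multiplication on the fibers of $E_A$ (and $E^A$) is smooth in the profinite diffeology, so that the notion of sesquilinearity on a fiber makes sense in our diffeological setting. This follows from Theorem \ref{thm65} applied to the constant profinite family of rings $\mathbf{R}_J = \mathbb{C}$ for all $J \in A$: the profinite structure is compatible with the $\mathbb{C}$-module structure on each $E_J$, so $E_A$ (resp.\ $E^A$) is a diffeological $\mathbb{C}$-module, and the fiberwise Hermitian property is then a pointwise algebraic condition preserved under smooth pull-back. The rest of the argument is routine and parallels the Riemannian case word for word.
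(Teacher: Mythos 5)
Your proposal is correct and follows essentially the same route the paper intends: the Proposition is stated without its own proof precisely because it is meant to be the Riemannian argument transported verbatim to sesquilinear forms, with the complex polarization identity replacing the real one, and your explicit verification that the fiberwise $\mathbb{C}$-module structure is diffeologically smooth is already guaranteed by the definition of a diffeological $\mathbb{K}$-vector pseudo-bundle (so invoking Theorem \ref{thm65} there is sound but not strictly needed). No gaps.
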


\begin{Proposition}
   A \textbf{pseudo-Riemannian metric} on a profinite vector bundle is the same as before but for 
   pseudo-Riemannian metrics. 
   \end{Proposition}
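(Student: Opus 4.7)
The plan is to adapt the four-way equivalence proof from the Riemannian case, handling the essential difference that non-degenerate replaces positive definite. First I would fix the definition: by pseudo-Riemannian we mean a fiberwise symmetric non-degenerate bilinear form of arbitrary (but fixed-per-fiber) signature, equivalently a quadratic form whose polar form is non-degenerate. The four items in the statement then read the same as before with the words ``positive definite'' replaced by ``non-degenerate''.

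The argument proceeds in three moves. First, the bilinear-to-quadratic correspondence: over $\R$, the polarization identity $g(u,v) = \tfrac{1}{2}\bigl(q(u+v) - q(u) - q(v)\bigr)$ gives a bijection between symmetric bilinear forms and quadratic forms, and this bijection preserves non-degeneracy by the very definition of non-degeneracy for quadratic forms. The correspondence is natural: it commutes with pullback along any $i_K^J$, so a family $\{q_I\}$ satisfies $q_J = (i_K^J)^* q_K$ if and only if the associated family $\{g_I\}$ satisfies $g_J = (i_K^J)^* g_K$. The same holds for the global objects $g_A$ and $q_A$ on the limit $E_A$.

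Second, the family-to-global correspondence: a compatible family $\{g_I\}$ with $g_J = (i_K^J)^* g_K$ for $J \leq K$ determines a map on $(E_A)^{(2)}$ by the universal property of the inductive limit, since $E_A$ is realized, via the profinite diffeology of Section \ref{s:profinite}, through the system of injections whose compatibility is expressed exactly by this pullback condition. The fiberwise symmetric bilinear structure passes to the limit because the bonding injections are fiberwise linear by Theorem \ref{thm65} and its corollary. Smoothness at the limit follows from smoothness at each level: a map on $E_A$ is smooth for the profinite diffeology if and only if its restrictions to the finite-dimensional pieces $E_I^{(2)}$ are smooth, which is exactly the data of the compatible family. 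The converse direction, from global to family, is simply restriction along the canonical inclusions $E_I \hookrightarrow E_A$.

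Third, the propagation of non-degeneracy: here lies the main obstacle distinguishing the pseudo-Riemannian setting from the Riemannian one. Positive-definiteness is preserved under restriction to any linear subspace, but non-degeneracy is not, so the equivalence cannot simply be read off the Riemannian proof. I would handle this by working in the inductive limit picture: each fiber of $E_A$ is a union of finite-dimensional fibers of the $E_I$, so any nonzero $v$ in a fiber of $E_A$ lies in some fiber of $E_I$; non-degeneracy of $g_I$ then supplies a witness $w \in E_I$, viewed inside $E_A$ via $i_K^J$'s, with $g_A(v,w) = g_I(v,w) \neq 0$, which yields non-degeneracy of $g_A$. In the opposite direction, non-degeneracy is imposed level-by-level as part of the data, which is why the statement is phrased with fiberwise non-degeneracy at each $I$. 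The cleanest formulation thus builds the global non-degenerate $g_A$ from a compatible family of fiberwise non-degenerate $g_I$'s, exactly paralleling the Riemannian statement with the asymmetric nature of non-degeneracy absorbed into the indexing over $A$.
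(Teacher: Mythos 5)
Your first two moves faithfully expand the paper's (essentially one-line) argument for the Riemannian case — polarization, naturality under the pullbacks $(i_K^J)^*$, and passage between a compatible family and the limit object — and the paper prints no separate proof for the pseudo-Riemannian proposition precisely because it is meant to be that same argument verbatim. The problem is your starting definition. You build non-degeneracy into ``pseudo-Riemannian'' and carry it through all four items, but the paper deliberately does not: the remark closing this subsection states explicitly that non-degenerate pseudo-Riemannian and pseudo-Hermitian metrics are \emph{not} treated here and are deferred to the later section on non-degeneracy. The proposition as intended concerns fiberwise symmetric bilinear (equivalently quadratic) forms of arbitrary signature with no non-degeneracy hypothesis, so that the four-way equivalence goes through exactly as in the positive-definite case.

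With non-degeneracy inserted, the equivalence you state is false in one direction, and your third move does not repair it. A globally non-degenerate $g_A$ need not restrict to non-degenerate $g_I$'s, since non-degeneracy is not inherited by linear subspaces; the paper's own example in the non-degeneracy section ($A=\{1,2\}$, $E_I=\R^I$, the hyperbolic form on $\R^2$ whose restriction to $\R^1$ vanishes) is a direct counterexample to ``item 1 $\Rightarrow$ item 2'' in your formulation. You notice the obstruction (``non-degeneracy is not preserved under restriction to a linear subspace'') but then resolve it by declaring that non-degeneracy is ``imposed level-by-level as part of the data'', which silently replaces the claimed four-way equivalence by a one-directional construction; that is precisely the content of the paper's later distinction between \emph{projectively non-degenerate} and \emph{weakly non-degenerate}, not of this proposition. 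Drop non-degeneracy from the statement, and your moves one and two already constitute the full proof.
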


   \begin{Proposition}
   A \textbf{pseudo-Hermitian metric} on a profinite vector bundle is the same as before but for pseudo-Hermitian 
   metrics.
   \end{Proposition}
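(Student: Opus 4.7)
The plan is to transfer the argument of the Riemannian proposition verbatim, with only two substitutions. First, I would replace ``symmetric positive definite bilinear'' by ``Hermitian sesquilinear non-degenerate'', so that $h_A : (E_A)^{(2)} \to \mathbb{C}$ (respectively the family $h_I : E_I^{(2)} \to \mathbb{C}$) takes complex values and satisfies $h(w,v) = \overline{h(v,w)}$ together with non-degeneracy in place of positivity. Second, the real polarization identity implicit in the Riemannian case is replaced by the complex polarization
\[
h(v,w) = \tfrac{1}{4}\bigl( q(v+w) - q(v-w) + i\, q(v+iw) - i\, q(v-iw)\bigr),
\]
where $q(v) = h(v,v) \in \R$ by Hermitian symmetry. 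Since this formula is smooth in the diffeological sense on each $E_I^{(2)}$, it furnishes the same bijection between the sesquilinear and quadratic descriptions as in the Riemannian proof.

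The compatibility condition $h_I = (i_K^I)^* h_K$ then polarizes to $q_I = (i_K^I)^* q_K$ and vice versa, so the four formulations (in $h_A$, $h_I$, $q_A$, $q_I$) remain equivalent exactly as in the Riemannian case. The passage from a compatible family to a global map on the limit uses the identifications of $(E_A)^{(2)}$ and $(E^A)^{(2)}$ as projective limits of the spaces $E_I^{(2)}$ along the induced injections, together with the universal property of projective limits of diffeological spaces guaranteed by the constructions of Section \ref{s:profinite}.

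The only point requiring genuine attention — and the one I would single out as the main, though still mild, obstacle — is the preservation of non-degeneracy in the limit. Since each $i_K^I$ is a section of $\pi_I^K$, any $v \in E_A$ with $h_A(v, \cdot) = 0$ satisfies $h_I(v_I, \cdot) = 0$ on all cylindrical directions at level $I$; non-degeneracy of each individual $h_I$ then forces $v_I = 0$ for all $I \in A$, whence $v = 0$. The analogous verification on $E^A$ is identical. This completes the adaptation of the Riemannian argument and establishes the four-way equivalence for pseudo-Hermitian metrics.
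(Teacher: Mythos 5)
Your transfer of the Riemannian argument is essentially the paper's own (the paper offers no separate proof here, relying entirely on the phrase ``the same as before''), and the two substitutions you make --- sesquilinear Hermitian in place of symmetric bilinear, and the complex polarization identity in place of the real one --- are exactly what is needed to establish the four-way equivalence between $h_A$, the family $(h_I)$, $q_A$ and the family $(q_I)$, together with the pull-back compatibility $h_I = (i_K^I)^* h_K \Leftrightarrow q_I = (i_K^I)^* q_K$.

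The one point where you depart from the paper, and where I would push back, is your treatment of non-degeneracy. In this subsection ``pseudo-'' only signals the dropping of positive-definiteness; the remark immediately following these propositions states explicitly that non-degeneracy is \emph{not} part of the present definition and is deferred to a dedicated section. That deferral is not cosmetic: the later section distinguishes ``projectively non-degenerate'' (each $h_I$ non-degenerate) from ``weakly non-degenerate'' (degeneracy can only be detected after passing to a larger index $J \geq I$), and gives examples showing the two are inequivalent. Your quick argument --- $h_A(v,\cdot)=0$ forces $v_I=0$ for all $I$ by non-degeneracy of each $h_I$ --- is the easy implication (projective non-degeneracy implies non-degeneracy on the limit), but presenting it as ``the only point requiring genuine attention'' and as settled obscures the fact that the converse fails and that a sensible notion of non-degeneracy on $E_A$ need not be checkable level by level. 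For the proposition as actually stated this material is simply not needed; I would delete it, or at least relocate it with the caveat that it anticipates (and slightly oversimplifies) the later discussion.
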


   \begin{Definition}
       A pseudo-Riemannian (resp. a pseudo-Hermitian) metric $$(g_I)_{I \in A}$$ on a profinite vector bundle is 
       positive definite if and only if $\forall I \in A, g_I$ is positive definite on $E_I.$
   \end{Definition}

   \begin{rem}
       In this subsection we did not deal with \emph{non-degenerate} pseudo-Riemannian or pseudo-Hermitian 
       metrics. This point will be discussed in a specific section.
   \end{rem}
   
\subsection{Tame differential forms}
The space of differential forms $\Omega^*(E_I)$ is the (classical) de Rham space of the (classical) manifold 
$E_I, $ therefore, by pull-back mappings, we have natural embeddings
$$ (\pi_I^K)^* : \alpha \in \Omega^*(E_I) \mapsto (\pi_I^K)^* \alpha \in \Omega^*(E_K)$$
and a projection
$$ (i_K^I)^* : \alpha \in \Omega^*(E_K) \mapsto (i_K^I)^* \alpha \in \Omega^*(E_I)$$

\begin{Theorem}
	The family $(\Omega^*(E_I),(i_K^I)^*,(\pi_I^k))$ is a profinite family.
\end{Theorem}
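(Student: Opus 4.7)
The plan is to verify, one by one, the axioms that make $(\Omega^*(E_I),(i_K^I)^*,(\pi_I^K)^*)$ a profinite family in the diffeological sense, keeping in mind that the roles of ``projection'' and ``injection'' are swapped relative to $(E,\pi,i)$: the contravariant functor $\Omega^*$ turns the injections $i_K^I:E_I\hookrightarrow E_K$ into the projections $(i_K^I)^*:\Omega^*(E_K)\to\Omega^*(E_I)$ of the new family, and the projections $\pi_I^K:E_K\to E_I$ into the injections $(\pi_I^K)^*:\Omega^*(E_I)\to\Omega^*(E_K)$. The backbone of the argument is the contravariance identity $(f\circ g)^*=g^*\circ f^*$ applied to the three structural identities of the profinite family $(E,\pi,i)$.

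First, I would record that each $\Omega^*(E_I)$ is a diffeological graded algebra equipped with its canonical diffeology (pullback along plots, see \cite{Igdiff}), and that pullback along a smooth map of finite-dimensional manifolds is a smooth morphism of such diffeological algebras. This gives smoothness of all the structural maps of the candidate profinite family.

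Second, I would verify the three algebraic compatibilities. From $i_K^J\circ i_J^I=i_K^I$ for $I\le J\le K$, contravariance yields $(i_K^I)^*=(i_J^I)^*\circ (i_K^J)^*$, which is the consistency property for the new projections $(i_K^I)^*$. From $\pi_I^J\circ\pi_J^K=\pi_I^K$, contravariance yields $(\pi_I^K)^*=(\pi_J^K)^*\circ (\pi_I^J)^*$, the consistency property for the new injections $(\pi_I^K)^*$. Finally, the splitting identity $\pi_J^K\circ i_K^J=\mathrm{Id}_{E_J}$ pulls back to $(i_K^J)^*\circ(\pi_J^K)^*=\mathrm{Id}_{\Omega^*(E_J)}$, which simultaneously shows that the new projections $(i_K^J)^*$ are surjective and that the new injections $(\pi_J^K)^*$ are injective, exactly what the definition of profinite family requires. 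Moreover, $(\pi_J^K)^*$ is a genuine section of $(i_K^J)^*$, providing the required injective section.

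The main obstacle, and the only genuinely delicate point, is conceptual rather than technical: the spaces $\Omega^*(E_I)$ are not finite-dimensional manifolds, so Definition of profinite family must be read in the diffeological vector space sense for this family, with the ``finite-dimensional manifold'' clause replaced by the condition that each $\Omega^*(E_I)$ is the de Rham algebra of the finite-dimensional manifold $E_I$. Once this reading is fixed, the verification is essentially formal, reducing to contravariance of the de Rham pullback and the three compatibilities of $(E,\pi,i)$ stated in the definition of profinite family.
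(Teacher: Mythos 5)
Your proposal is correct and follows essentially the same route as the paper's proof: both reduce the claim to contravariance of the de Rham pullback, deriving the two consistency identities from $(f\circ g)^*=g^*\circ f^*$ and the splitting identity from $\pi_J^K\circ i_K^J=\mathrm{Id}_{E_J}$. You are in fact slightly more careful than the paper on two points worth keeping: you orient the split correctly as $(i_K^J)^*\circ(\pi_J^K)^*=\mathrm{Id}_{\Omega^*(E_J)}$ (the paper's commutative triangle, read literally, asserts the composition in the other order, which is false), and you flag explicitly that the $\Omega^*(E_I)$ are not finite-dimensional manifolds, so the definition of profinite family must be read in a relaxed diffeological sense here --- an issue the paper's proof passes over in silence.
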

\begin{proof}
For fixed indexes $(I,K) \in A^2,$ if $I \leq K;$ the following diagram is commutative.  
 \begin{center}
  \begin{tikzcd}
 \Omega^*(E_I) \ar{rr}{(\pi_I^K)^*}  & & \Omega^*(E_K) \\
   & \Omega^*(E_K) \ar{ul} {(i_{K}^{I})^*} \ar[swap]{ur}{id} &
  \end{tikzcd}
  \end{center}
Then, we remark that $\forall (I,J,K) \in A^3,$ if $I \leq J \leq K, $ 
$(i_{K}^{J} i_J^I)^* =  (i_J^I)^* (i_{K}^{J})^*$ and $(\pi_{I}^{J} \pi_J^K)^* =  (\pi_J^I)^* (\pi_{K}^{J})^*.$
This ends the proof.
\end{proof}

It is a classical fact that the notion of closed or exact form is not invariant under projections. Therefore, we 
are led to define the following objects:

\begin{Definition} \label{d:tamediff}
Let $n \in \N.$ We define the space of tame $n-$forms $\Omega_t^n(E_A)$ as the product limit space of the family 
$(\Omega^n(E_I),(i_K^I)^*,{{}(\pi_I^K)^*}).$
\end{Definition}
By standard properties of the pull-back,
    the de Rham differential on each algebra $\Omega^*(E_I)$ extends to an exterior differential on 
    $\Omega_t^n(E_A).$

\section{Non-degeneracy {{} on profinite spaces}}

\begin{Definition}
A pseudo-Riemannian metric $g=(g_I)_{I \in A}$ (resp. a tame 2-form $(\omega_I)_{I \in A}$) is \emph{{{}projectively} 
non-degenerate}
if $\forall I \in A,$ $g_I$ (resp. $\omega_I$) is non-degenerate (in the classical sense).
\end{Definition}

\begin{Definition}
A pseudo-Riemannian metric $g$ on $E_A$ (resp. a tame 2-form $\omega$ on $E_A$ is 
\emph{weakly non-degenerate} if {{} 
$\forall u \in TE_I \mbox{ such that } u \neq 0, \exists J \geq I, \exists v \in TE_J, g_J(D\iota^I_J(u),v) \neq 0$ (respectively, $
\omega_J(D\iota^I_J(u),v) \neq 0$) .}
\end{Definition}
\begin{Proposition}
    ``projectively non-degenerate'' implies ``weakly non-degenerate''.
\end{Proposition}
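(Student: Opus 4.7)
The plan is to observe that the definition of weakly non-degenerate allows the choice $J = I$, so the problem reduces directly to the assumed classical non-degeneracy at the single level $E_I$. No compatibility across levels needs to be invoked, and the same argument will cover both the pseudo-Riemannian and the tame 2-form cases.

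First I would record a preliminary point about the injection $i^I_I$. Since $\leq$ is reflexive we have $I \leq I$, and the profinite family axioms provide an injection $i^I_I : E_I \to E_I$ with $\pi^I_I \circ i^I_I = Id_{E_I}$. Because $\pi^I_I = Id_{E_I}$, we get $i^I_I = Id_{E_I}$, and hence $Di^I_I : TE_I \to TE_I$ is the identity. This is the only structural observation needed.

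Next, given a nonzero $u \in TE_I$, I would pick $J = I$ in the definition of weak non-degeneracy. Then $Di^I_J(u) = u$, and the projective non-degeneracy hypothesis says that $g_I$ is non-degenerate on $TE_I$ in the classical sense. Therefore there exists $v \in TE_I = TE_J$ with $g_J(Di^I_J(u), v) = g_I(u, v) \neq 0$, which is exactly the condition required for weak non-degeneracy. The identical argument with $\omega_I$ replacing $g_I$ handles the tame 2-form case.

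There is no real obstacle here: the proof amounts to the remark that projective non-degeneracy is a strictly stronger pointwise condition, and weak non-degeneracy is obtained simply by taking the allowed choice $J = I$. If anything, the only subtlety worth flagging is that the definition of weak non-degeneracy has to permit $J = I$ (i.e.\ the inequality $J \geq I$ is non-strict); this is clear from the statement.
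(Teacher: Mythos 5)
Your proof is correct: taking $J=I$ (allowed since $\leq$ is reflexive) and observing that $i^I_I = Id_{E_I}$, so that $Di^I_I$ is the identity on $TE_I$, reduces weak non-degeneracy directly to the classical non-degeneracy of $g_I$ (resp.\ $\omega_I$) assumed in the projective condition. The paper states this proposition without any proof, evidently treating exactly this observation as immediate, so your argument simply makes explicit the step the authors left unsaid.
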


\begin{example}
The reciprocal statement is false.
    Let $A=\{1,2\}$and let $E_I = \R^I$ for $I \in A.$ Then the (constant) pseudo-Riemannian metric $$\left(\begin{array}{cc} 0 & 1 \\1 & 0 \end{array}  \right)$$
    is weakly non-degenerate for $TE_A = TE_2$ but not projectively non-degenerate for the profinite family $(E_1,E_2).$
\end{example}

\section{Symplectic geometry}

{
\begin{Definition}
A tame presymplectic form on the profinite family $(E, \pi,i)$ is a tame closed 2-form 
$\omega =(\omega_I)_{I \in A} \in \Omega^2_t(E_A)$ such that $\omega_I$, $I \in A$, is of constant
rank. If moreover $\omega$ is projectively non-degenerate, we call it tame symplectic form.  
\end{Definition}
{}
\begin{rem} The condition projectively non-degenerate is more restrictive than weakly non degenerate, even for symplectic forms. For example, let us define $\omega = \sum_{i \in \N} dx_{2i} \wedge dx_{2i+1}$ on $\R^\infty $ which is the inductive limit of $\{\R^n, n \in \N\}.$ Then $\omega$ is degenerate on each $\R^{2i+1},$ while this pathology does not appear with the family $\{\R^{2n}, n \in \N\}$ which also defines $\R^\infty$ as an inductive limit.  
\end{rem}}

If $E_A$ is equipped with a {{} non-degenerate} Riemannian metric $(g_I)_{I \in A},$ then for each 
$J \in A$ we can build $\mathfrak{I}_J \in End(T E_J)$ such that 
$$ \omega_J(\cdot\, , \cdot) = g_I(\cdot\, , \mathfrak{I}_J(\cdot))\; .$$

\begin{Definition}
{{} Let $\omega$ be a tame symplectic form.}
The Hamiltonian vector field corresponding to a function $H \in \Omega^0_t(E_A)$ is a collection of smooth vector 
fields 
$$(X_{H,J})_{J \in A} \in \prod_{J \in A} \mathcal{X}(TE_J)$$ 
such that $$\omega_J(X_{H,J}\,, \cdot) = dH_J $$ 
for all $J \in A .$
\end{Definition}

\begin{Theorem}
    If $f \in \Omega^0_t(E_A),$ then the Hamiltonian vector field defines a smooth section of the tangent vector space $T E^A.$
\end{Theorem}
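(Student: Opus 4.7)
The plan is to verify that the family $(X_{H,J})_{J \in A}$ furnished by the pointwise symplectic duality $\iota_{X_{H,J}} \omega_J = dH_J$ assembles into a smooth section of the projective-limit tangent bundle $TE^A \to E^A$. By Theorem~\ref{th:tangent}, $TE^A$ coincides with the product limit of the profinite family $(TE, D\pi)$, so such a section amounts to a coherent family $(Y_J)_{J \in A}$ of smooth vector fields $Y_J \in \mathcal{X}(E_J)$ satisfying $D\pi_J^K \circ Y_K = Y_J \circ \pi_J^K$ for every $J \leq K$, together with smoothness at each level; smoothness of the induced map $E^A \to TE^A$ then follows automatically from the universal property of the product diffeology defining the profinite structure on $TE^A$.

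Each $X_{H,J}$ exists, is unique and is smooth as the classical Hamiltonian vector field on the finite-dimensional symplectic manifold $(E_J,\omega_J)$: this is a direct consequence of the projective non-degeneracy of $\omega$ and of the fact that $H_J \in C^\infty(E_J,\R)$ is smooth by definition of a tame $0$-form. The heart of the argument is the compatibility check. Fix $J \leq K$ and $x \in E_J$, set $y = i_K^J(x)$, and use the tame relations $\omega_J = (i_K^J)^* \omega_K$ and $H_J = H_K \circ i_K^J$. For any $v \in T_x E_J$,
\[
\omega_K(y)\bigl(D i_K^J\, X_{H,J}(x),\, D i_K^J v\bigr)
= \omega_J(x)\bigl(X_{H,J}(x), v\bigr)
= dH_J(x)(v)
= dH_K(y)\bigl(D i_K^J v\bigr),
\]
and the defining equation of $X_{H,K}$ gives $\omega_K(y)(X_{H,K}(y), D i_K^J v)$ for the same quantity. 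Hence $D i_K^J\, X_{H,J}(x) - X_{H,K}(y)$ lies in the $\omega_K$-symplectic orthogonal of the subspace $D i_K^J(T_x E_J) \subset T_y E_K$. Applying $D\pi_J^K$ and invoking $D\pi_J^K \circ D i_K^J = \mathrm{Id}_{T_x E_J}$ (which follows from $\pi_J^K \circ i_K^J = \mathrm{Id}_{E_J}$) yields $D\pi_J^K X_{H,K}(y) = X_{H,J}(x) = X_{H,J}(\pi_J^K(y))$, i.e.\ the required compatibility at every point of the form $y = i_K^J(x)$. Propagation to all coherent families $(x_J)_J \in E^A$ then proceeds by exploiting the characterization of $E^A$ as the product limit, and by using the fact that the filter $\mathfrak F$ of sections of $A$ (and, when $A$ is finitely cylindrical, its finite base $\mathfrak B_{\text{finite}}$) generates the constraints defining both $E^A$ and the profinite identification of $TE^A$ in Theorem~\ref{th:tangent}.

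The main obstacle I anticipate is precisely this last propagation step: the tame pullback relation $\omega_J = (i_K^J)^* \omega_K$ constrains $\omega_K$ directly only along $i_K^J(E_J)$, whereas the compatibility required for the family $(X_{H,J})_J$ to define a section of $TE^A$ is a pointwise condition over every coherent sequence, not only over points in images of inclusions. Handling it cleanly requires using the structural interplay between the projections $\pi$ and injections $i$ of the profinite family, combined with the projective non-degeneracy of $\omega$, so that the symplectic orthogonal of $D i_K^J(T_x E_J)$ is controlled by $\ker D\pi_J^K$ at the relevant points. Once this is secured, smoothness of the resulting section $E^A \to TE^A$ is immediate from the smoothness of each $X_{H,J}$ together with the definition of the projective diffeology on $TE^A$.
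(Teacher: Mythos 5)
Your strategy is the natural one, and it is considerably more explicit than the paper's own justification, which consists of the single phrase ``By compatibility conditions.'' Unfortunately the two difficulties you flag at the end are not side issues to be ``handled cleanly'' later: they are the entire content of the theorem, and your argument does not close either of them. First, your computation only shows that $Di_K^J\,X_{H,J}(x) - X_{H,K}(i_K^J(x))$ lies in the $\omega_K$-symplectic orthogonal of $Di_K^J(T_xE_J)$. Since $\omega_J=(i_K^J)^*\omega_K$ is non-degenerate, $Di_K^J(T_xE_J)$ is a symplectic subspace of $T_{i_K^J(x)}E_K$, so its symplectic orthogonal is a complement of the same dimension as $\ker D\pi_J^K$; but nothing in the definition of a profinite family or of a tame symplectic form forces these two complements to coincide. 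The desired conclusion $D\pi_J^K X_{H,K}=X_{H,J}\circ\pi_J^K$ along the image of $i_K^J$ is therefore an additional compatibility hypothesis on the pair $(\omega,\pi,i)$, not a consequence of the definitions, and you would need to either impose it or exhibit it from some unstated structure.

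Second, and more seriously, a point of $E^A$ is a coherent family $(x_J)_{J\in A}$ with $x_J=\pi_J^K(x_K)$, and $x_K$ need not lie in $i_K^J(E_J)$. The tame relation $\omega_J=(i_K^J)^*\omega_K$ constrains $\omega_K$ only along that image, so at a general $x_K$ there is no relation at all between $\omega_K(x_K)$, $dH_K(x_K)$ and the data at level $J$; your proposed ``propagation by exploiting the characterization of $E^A$ as the product limit'' has no mechanism behind it. (Note also that you cannot rescue this by assuming $\omega_K$ is a pullback $(\pi_J^K)^*\omega_J$, since such a form is degenerate whenever the fibres of $\pi_J^K$ are positive-dimensional, contradicting projective non-degeneracy.) So the gap is genuine; the paper's proof does not address it either, and as stated the theorem appears to require an extra compatibility assumption relating $\ker D\pi_J^K$ to the symplectic orthogonal of $Di_K^J(TE_J)$, or a restriction to points in the inductive limit $E_A$ where every coordinate is eventually of the form $i_K^J(x_J)$.
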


\begin{proof}
By compatibility conditions.
\end{proof}}

{ Earlier in section \ref{section:vpb} we considered the notion of a diffeological action of a group $G$ on a 
diffeological space $E.$ Let $\omega$ be a symplectic form on $E.$ {{} 
	A profinite Lie group 
	$G = (G_I)$ has a profinite symplectic action on a profinite symplectic space $E$ if each $G_I$act on each $E_I$ "symplectically", that is, 
	$\phi^*_{I,g}\omega_I = \omega_I$ for all $I \in A$ and all $g \in G_I$. Then the} action of a Lie group $G$ on $E$ is called 
Hamiltonian action with respect to $\omega,$ if for every $\xi$ in the Lie algebra {{}$\mathfrak{G}$} there exists a smooth function 
$$ \mu(\xi):E\longrightarrow\mathbb{R}$$
such that the infinitesimal action of $\xi$ on $E$ (i.e. $X_\xi=\frac{d}{dt}(exp(t \xi))_E\mid_{t=0}$) is Hamiltonian, in other words
$$X_\xi=X_{\mu(\xi)},$$ where $X_{\mu(\xi)}$ is the Hamiltonian vector field determined by 
$\omega(X_{\mu(\xi)},-)=d\,\mu(\xi).$ 

\begin{Definition}
For every $x\in E,$ the mapping $\mu(x):{{}\mathfrak{g}}\rightarrow\mathbb{R}; \ \ (\mu(x))(X)=(\mu(X))(x)$ is a linear form on {{}$\mathfrak{g}$} and defines a smooth mapping
\begin{eqnarray*}
\mu:E\longrightarrow{{}\mathfrak{g}^*} \\
x\longmapsto \mu(x)
\end{eqnarray*}
which is called the \textbf{momentum mapping} of the Hamiltonian action $G$ on $E.$
\end{Definition}
}

	\section{Elementary examples}
	\subsection{Geometry on matrices}
	Through the canonical inclusions 
	$$\R \subset \R^2 \subset \cdots \subset \R^n \subset \cdots$$
	let us consider the corresponding inclusion sequence of square matrices
	$$ \R \subset M_2(\R) \subset \cdots \subset M_n(\R) \subset \cdots$$
	define a profinite family of spaces 
	$$ E_n = M_n(\R)$$ indexed on $a = \N^*.$
	\begin{Theorem}
		Let us consider $(e_n)_{n \in \N^*}$ an orthonormal base of a Hilbert space $H.$ Then $E_A$ can be identified with an algebra of finite rank operators on $H,$ different from the algebra of finite rank operators on $H.$
	\end{Theorem}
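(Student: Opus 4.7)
The plan is to construct an explicit algebra embedding $\Phi: E_A \hookrightarrow B(H)$, describe its image intrinsically, and then produce a finite rank operator that lies outside this image. For a matrix $M \in M_n(\R)$, define $\Phi(M) = T_M$ to be the bounded operator acting on $H$ by $T_M(e_i) = \sum_{j=1}^n M_{ji}\, e_j$ for $1 \leq i \leq n$, $T_M(e_i) = 0$ for $i > n$, extended by linearity and continuity. The canonical inclusion $i^n_{n+1}: M_n(\R) \hookrightarrow M_{n+1}(\R)$ sends $M$ to the block matrix $\mathrm{diag}(M,0)$, which clearly induces the same operator on $H$. Hence $\Phi$ descends to a well-defined map on $E_A = \bigcup_{n\in\N^*} M_n(\R)$.

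Next, I would verify that $\Phi$ is an injective algebra homomorphism. Additivity and scalar linearity are immediate once one enlarges both operands to a common block size $M_n(\R)$. Multiplicativity $\Phi(MN) = \Phi(M)\Phi(N)$ reduces, after such enlargement, to the classical compatibility between matrix multiplication and composition of linear operators in the basis $(e_1,\ldots,e_n)$. Injectivity is automatic: if $\Phi(M) = 0$ then every entry $M_{ji} = \langle T_M e_i, e_j\rangle$ vanishes.

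I would then identify the image $\Phi(E_A)$ intrinsically as
\[
\Phi(E_A) = \bigl\{ T \in B(H) \,\big|\, \exists\, n \in \N^*,\ T(e_i) = 0 \text{ for } i > n \text{ and } T(e_i) \in \mathrm{span}(e_1,\ldots,e_n) \text{ for } i \leq n \bigr\}.
\]
Every such operator has rank at most $n$, so $\Phi(E_A)$ is indeed a subalgebra of the algebra $\mathcal{F}(H)$ of finite rank operators on $H$.

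Finally, to show that the inclusion $\Phi(E_A) \subsetneq \mathcal{F}(H)$ is strict, I would exhibit a concrete counterexample. Let $v = \sum_{k=1}^\infty 2^{-k} e_k \in H$ and consider the rank-one operator $T(x) = \langle x, v\rangle\, e_1$. Since $T(e_k) = 2^{-k}\, e_1 \neq 0$ for every $k \in \N^*$, no index $n$ can satisfy $T(e_k) = 0$ for $k > n$, so $T \notin \Phi(E_A)$. The only real subtlety in the argument is keeping the block-inclusion bookkeeping straight so that the inductive limit structure truly matches the nested ``finite upper-left block'' description of $\Phi(E_A)$; once that correspondence is pinned down, each step is routine.
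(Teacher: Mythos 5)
Your proposal is correct and follows essentially the same route as the paper: identify $E_A$ with the subalgebra of operators supported on a finite initial block of the basis $(e_n)$, then exhibit a rank-one operator built from a vector with infinitely many nonzero coordinates (the paper uses the orthogonal projection onto $\R x$ for $x \in H \setminus \R^\infty$, you use $\langle\cdot, v\rangle e_1$ with $v=\sum 2^{-k}e_k$) to show the inclusion is strict. Your write-up just makes explicit the embedding and bookkeeping that the paper's one-line proof leaves implicit.
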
  

\begin{proof}
	{$E_A$ is obviously an algebra of finite rank operators, but given $x \in H - \R^\infty,$ the orthogonal projection on $\R x$ is a rank 1 operator which is not to $E_A$ .}
\end{proof}


The same way, since $A$ is totally ordered, $E^A$ is an algebra. {We can also make the same constructions by replacing $\R$ by $\mathbb{C}.$} Let us show that $E^A$ is far from being identifiable with some algebra of bounded operators.
\begin{Theorem}
	Let $H = L^*(S^1,\mathbb{C}),$ equipped with the Fourier base indexed by $\N^*.$ If $\Delta$ is the positive Laplacian, then $e^\Delta \in E^A.$ 
\end{Theorem}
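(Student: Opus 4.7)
The plan is to exhibit $e^\Delta$ directly as a coherent family in $E^A = \varprojlim_{n \in \N^*} M_n(\mathbb{C})$, rather than attempt to interpret it as a bounded operator on $H$. Since the Fourier basis $(e_k)_{k \in \N^*}$ diagonalizes $\Delta$, with a sequence $(\lambda_k)_{k \in \N^*}$ of nonnegative eigenvalues, the natural candidate is
$$A_n := \operatorname{diag}\bigl(e^{\lambda_1}, e^{\lambda_2}, \ldots, e^{\lambda_n}\bigr) \in M_n(\mathbb{C}),$$
and it suffices to verify the projective compatibility $\pi^{n+1}_n(A_{n+1}) = A_n$ for every $n \in \N^*$.

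To carry out this verification, I would first pin down the projections. The injections $i^n_{n+1} : M_n(\mathbb{C}) \hookrightarrow M_{n+1}(\mathbb{C})$ are given by padding a matrix with an extra zero row and column so as to realize it as an operator on $\operatorname{span}(e_1,\ldots,e_{n+1})$. The natural compatible projection, namely $\pi^{n+1}_n(B) = P_n B P_n$ where $P_n$ denotes the orthogonal projection onto $\operatorname{span}(e_1,\ldots,e_n)$, extracts the top-left $n\times n$ block and fulfils the profinite relation $\pi^{n+1}_n \circ i^n_{n+1} = \mathrm{Id}_{M_n(\mathbb{C})}$. Applied to the diagonal matrix $A_{n+1}$, this truncation simply removes the last entry $e^{\lambda_{n+1}}$ and returns exactly $A_n$; the longer compositions $\pi^m_n = \pi^{n+1}_n \circ \cdots \circ \pi^m_{m-1}$ inherit the same block-truncation description. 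Hence $(A_n)_{n \in \N^*}$ lies in $E^A$, and this element deserves the name $e^\Delta$.

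The subtle point, and the reason the theorem is informative, is that $e^\Delta$ is \emph{not} a bounded operator on $H$: the eigenvalues $e^{\lambda_n}$ blow up super-polynomially with $n$, so no nonzero generic vector can be mapped into $H$ in any meaningful way. But the projective limit $E^A$ imposes no growth constraint whatsoever on the sequence of truncations; only the pairwise compatibility is required. There is therefore no genuine technical obstacle, and the real content of the statement is to highlight how far $E^A$ extends beyond any algebra of bounded operators, sharpening the contrast already made in the previous theorem concerning finite-rank operators.
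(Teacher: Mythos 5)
Your proposal is correct and follows essentially the same route as the paper: both arguments rest on the fact that $\Delta$ is diagonal in the Fourier basis, so each $\operatorname{span}(e_1,\dots,e_n)$ is stable and the truncations of $e^\Delta$ form a compatible family in $E^A$. You merely make explicit the diagonal matrices $A_n$ and the verification $\pi^{n+1}_n(A_{n+1})=A_n$, which the paper leaves implicit.
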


\begin{proof}
	{The operator $\Delta$ is diagonal in the Fourier base and hence each space $\R^n$ is stable under $\Delta$. Therefore, $\Delta$ and $e^\Delta$ are diagonal, bounded and well-defined on each $\R^n,$ therefore $e^\Delta \in E^A.$ }
\end{proof}
	\subsection{Geometry on Jet spaces}
    {Reformulating {{}\cite{GP2017} together with \cite{MR2024-2}}, the jet space $J^\infty(M,E)$ is the product set of the family $X=\left\{J^k(M,E) \, | \, k \in \N^* \right\},$ where the projections $\pi_k^{k+p}:J^{k+p}(M,E) \rightarrow J^k(M,E), \forall (k,p) \in (\N^*)^2,$ are forgetful mappings and the injections $i^k_{k+p}:J^{k}(M,E) \rightarrow J^{k+p}(M,E), \forall (k,p) \in (\N^*)^2,$ are defined through the 0-sections of the vector bundle $J^{k+p}(M,E)$ over its base $J^{k}(M,E).$ Since this profinite family is indexed by $A=\N,$ one can see that the statements in \cite{MR2024-2} about $TJ^\infty(M,E) = TX^A$ in our notations is a specification of Theorem \ref{th:tangent}.}
	
 \subsection{The Wiener space as a diffeological space}

Let \( W = C_0([0,1], \mathbb{R}^n) \) denote the classical Wiener space, i.e., the Banach space of continuous functions \( \gamma : [0,1] \to \mathbb{R}^n \) with \( \gamma(0) = 0 \), endowed with the topology of uniform convergence.

We endow \( W \) with the \emph{cylindrical diffeology}, generated by the collection of evaluation maps:
\[
\pi_{t_1, \dots, t_k} : W \to \mathbb{R}^{nk}, \quad \gamma \mapsto (\gamma(t_1), \dots, \gamma(t_k)),
\]
for arbitrary finite sequences \( 0 < t_1 < \cdots < t_k \leq 1 \). 
Therefore, within our terminology, the index set is given by $A = \mathfrak{P}_f([0,1]).$
Given $K \in A,$ we have $W_K = \R^{n|K|}$ and there are canonical projections $W \rightarrow W_K$ given by the canonical evaluation maps. This family $(W_K)$ form a profinite diffeological space

\subsubsection*{Cotangent structure and differential forms}

The cotangent space \( T^*_\gamma W \), at a path \( \gamma \in W \), is defined as the colimit
\[
T^*_\gamma W = \varinjlim_{(t_1, \dots, t_k)} T^*_{\gamma(t_1), \dots, \gamma(t_k)} \mathbb{R}^{nk},
\]
over the system of evaluation maps, in other words, $T^*W$ is the regular cotangent space of the family $(W_K).$ A cotangent vector at \( \gamma \) is thus represented by a compatible family of finite-dimensional covectors \( \eta_{t_1, \dots, t_k} \), defined modulo pullbacks by refinement of time partitions.

A smooth differential 1-form on \( W \) is a compatible collection \( \{ \alpha_p \in \Omega^1(U) \} \), one for each plot \( p : U \to W \), satisfying the usual pullback condition under smooth reparametrizations. Such a form can be written locally as the pullback:
\[
\alpha = \pi_{t_1, \dots, t_k}^* \omega,
\]
for some 1-form \( \omega \in \Omega^1(\mathbb{R}^{nk}) \), and is called a \emph{cylindrical 1-form}. In explicit terms, a 1-form \( \alpha \) evaluated on a tangent vector \( h \in W \) at a path \( \gamma \in W \) takes the form:
\[
\langle \alpha, h \rangle = \sum_{i=1}^k \xi_i \cdot h(t_i),
\]
for some finite family of times \( t_i \in [0,1] \) and covectors \( \xi_i \in (\mathbb{R}^n)^* \).

Higher-degree differential forms on \( W \) are constructed analogously: a smooth \( r \)-form is locally a pullback
\[
\pi_{t_1, \dots, t_k}^* \omega^{(r)}, \quad \omega^{(r)} \in \Omega^r(\mathbb{R}^{nk}).
\]
The collection \( \Omega^\bullet(W) \) of all such forms defines a complex with differential \( \mathrm{d} \) induced from the standard de Rham complex on \( \mathbb{R}^{nk} \). Within all these definitions, the space of so-called cylindrical differential forms (in the usual sense) is exactly the space of tame differential forms $\Omega^*(W)$ in the sense of Definition \ref{d:tamediff}. 

\medskip

\paragraph{\bf Acknowledgements:}
J-P.M. acknowledges the France 2030 framework programme Centre Henri Lebesgue ANR-11-LABX-0020-01 for 
creating an attractive mathematical environment. 
E.G.R.'s research was partially supported by the FONDECYT grant \#1241719.

\end{document}